\documentclass{siamart250211}
\usepackage[T1]{fontenc}
\usepackage{ae,aecompl} % for Mario to have better quality output.
\usepackage{amsfonts}
\usepackage{amsmath}
\usepackage{thmtools,thm-restate}
\usepackage{amssymb}
\usepackage{amsbsy}
\usepackage{xcolor}
\usepackage{graphicx}
\usepackage{epstopdf}\usepackage{epsfig} % me, again
\usepackage{array}
\usepackage{enumerate}
\usepackage{enumitem}
\usepackage{appendix}
\usepackage{multirow}
\usepackage{cite}
\usepackage{tensor}
\usepackage{algorithm}
\usepackage{algorithmicx}
\usepackage{algpseudocode}
\usepackage{bbold}
\usepackage{comment}
\usepackage{xspace}

\usepackage{booktabs}
\usepackage{array}        % for column types like p{width}, m{width}, etc.
\usepackage{caption}      % for better caption formatting

\usepackage{pgf}
\usepackage{pgfplots}
\usepackage{pgfplotstable}
\usepackage{tikz}
\usetikzlibrary{shapes,arrows}
\usepgfplotslibrary{fillbetween}

\usepackage{subfigure}
\usepackage{url}
\urldef{\mailsb}\path|e.di.napoli@fz-juelich.de|
\urldef{\mailsc}\path|dinapoli@jara.rwth-aachen.de|  

\newcolumntype{L}[1]{>{\raggedright\let\newline\\\arraybackslash\hspace{0pt}}m{#1}}
\newcolumntype{C}[1]{>{\centering\let\newline\\\arraybackslash\hspace{0pt}}m{#1}}
\newcolumntype{R}[1]{>{\raggedleft\let\newline\\\arraybackslash\hspace{0pt}}m{#1}}

\newcommand{\condEstSubplot}[2]{%
\begin{minipage}[b]{0.32\linewidth}
\centering
\begin{tikzpicture}
\begin{axis}[
    width=\textwidth,
    height={0.9*\textwidth},
    ymode=log,
    xlabel={Iteration index},
    ylabel={cond},
    xmin=0,
    legend entries={},
    tick label style={font=\small},
    label style={font=\small},
    line width=1pt,
]

\pgfplotstableread[col sep=comma]{figdata/condEst/#1_OptN.csv}\dfno
\pgfplotstableread[col sep=comma]{figdata/condEst/#1_Opt1.csv}\dfopt

\addplot+[mark=o, solid, mark size=1.8pt] table[x=Iteration, y=exact_condition_number] {\dfno};
\addplot+[mark=o, dotted, mark size=1.8pt] table[x=Iteration, y=cond_V] {\dfno};
\addplot+[mark=triangle*, solid, mark size=1.8pt, color=black] table[x=Iteration, y=exact_condition_number] {\dfopt};
\addplot+[mark=triangle*, dotted, mark size=1.8pt, color=green!40!black] table[x=Iteration, y=cond_V] {\dfopt};
\end{axis}
\end{tikzpicture}
\vspace{-0.3em}
\begin{center}
\small #2
\end{center}
\end{minipage}%
}
\renewcommand\span{\mathop\mathrm{span}\nolimits}

\newcommand\diag{\mathop\mathrm{diag}\nolimits}

\newcommand\eqdef{\buildrel {\rm def}\over=} 
\newcommand\cd{\mathop\mathrm{\kappa}_2\nolimits}
\newcommand\cdest{\mathop\mathrm{\kappa}_2^\mathrm{est}\nolimits}

\newcommand\be{\begin{equation}}
\newcommand\ee{\end{equation}}
\newcommand\bea{\begin{eqnarray}}
\newcommand\eea{\end{eqnarray}}
\newcommand\bi{\begin{itemize}}
\newcommand\ei{\end{itemize}}
\newcommand{\odg}[1]{\mathcal{O}\left( #1 \right)}

\newcommand\csi{ChASE\xspace}
\newcommand\rto[2]{|\rho_{#1}|^{#2}}
\newcommand\subit[2]{#1^{(#2)}}
\newcommand\her[1]{\left(#1\right)^H}
\newcommand\inv[1]{\left(#1\right)^{-1}}
\newcommand\pl{{\textrm p}_{m}}
\newcommand\p[1]{{\textrm p}_{m_#1}}
\newcommand\rowmat[2]{\left(
  \begin{tabular}{c c} 
    $#1$ & $#2$ 
  \end{tabular}\right)}
\newcommand\colmat[2]{\left(
  \begin{tabular}{c} 
    $#1$ \\ 
    $#2$ 
  \end{tabular}\right)} 
\newcommand\sqrmat[4]{\left(
  \begin{tabular}{c c} 
    $#1$ & $#2$ \\
    $#3$ & $#4$
  \end{tabular}\right)} 
\newcommand\U{\hat{U}}

\newcommand\Rcal{\ensuremath{\mathcal{R}}}
\newcommand\Pcal{\ensuremath{\mathcal{P}}}
\newcommand\Scal{\ensuremath{\mathcal{S}}}
\newcommand\Ibb{\ensuremath{\mathbb{1}}} % requires \usepackage{bbold}

\newcommand\nev{\textsf{nev}}
\newcommand\nex{\textsf{nex}}
\newcommand{\R}{\mathbb{R}}
\newcommand{\C}{\mathbb{C}}
\newcommand{\PP}{\mathbb{P}}
\newcommand{\acosh}{\textrm{cosh}^{-1}}

\newcommand\eqalign[1]{%
	\vcenter{%
		\normalbaselines \advance\baselineskip 5pt
		\advance\lineskip 5pt \tabskip=0pt
		\halign{%
			&\hfil $\displaystyle{##{}}$&
			$\displaystyle{{}##}$\hfil\cr
			#1\crcr
			}%
		}%
	}

\newcommand\dotline{\par\hbox to \hsize{\dotfill}\par}
\makeatletter
\def\befored@t#1.#2.#3;{#1}
\def\afterd@t#1.#2.#3;{#2}
\def\refhead#1{\edef\next{\ref{#1}}\expandafter\befored@t\next..;}
\def\reftail#1{\edef\next{\ref{#1}}\expandafter\afterd@t\next..;}
\def\lsim{\mathrel{\mathpalette\@versim<}}
\def\gsim{\mathrel{\mathpalette\@versim>}}
\def\@versim#1#2{\vcenter{\offinterlineskip
        \ialign{$\m@th#1\hfil##\hfil$\crcr#2\crcr\sim\crcr } }}
\newcommand\becomes[1]{\mathchoice{\becomes@\scriptstyle{#1}}
   {\becomes@\scriptstyle{#1}} {\becomes@\scriptscriptstyle{#1}}
   {\becomes@\scriptscriptstyle{#1}}}
\def\becomes@#1#2{\mathrel{\setbox0=\hbox{$\m@th #1{\,#2\,}$}%
        \mathop{\hbox to \wd0 {\rightarrowfill}}\limits_{#2}}}
\makeatother
\newenvironment{remark}[1][Remark:]{\begin{trivlist} \it
\item[\hskip \labelsep {\scshape #1}]}{\end{trivlist}}

\title{Estimating the condition number of Chebyshev filtered vectors with application to the ChASE library%\thanks{Article based on
    }
            
\author{Edoardo Di Napoli and Xinzhe Wu\footnotemark[3]}%
\pgfplotsset{compat=1.18}
\begin{document}

\maketitle
%\slugger{simax}

\renewcommand{\thefootnote}{\fnsymbol{footnote}}

% \footnotetext[2]{Department of Mathematics, Faculty of Science, University of Zagreb.
% Bijeni\v{c}ka cesta 30, 10000--Zagreb, Republic of Croatia. \mailsa}

\footnotetext[3]{J\"ulich Supercomputing Centre,% Institute for Advanced Simulation,\\
Forschungszentrum J\"ulich.
Wilhelm-Johnen stra\ss e, 52425--J\"ulich, Germany. \mailsb}
%\footnotetext[4]{Aachen Institute for Advance Study in Computational Engineering Science.
%Schinkelstra\ss e 2, 52072 Aachen, Germany. \mailsc}

\renewcommand{\thefootnote}{\arabic{footnote}}

\begin{abstract} % at least 70 and no more than 150 words
  Chebyshev filtered subspace iteration is a well-known algorithm for the solution of (symmetric/Hermitian) algebraic eigenproblems which has been implemented in several application codes~\cite{Kronik:2006ff, abinit:2020} or in stand alone libraries~\cite{ChASE}. An essential part of the algorithm is the QR-factorization of the array of vectors spanning the active subspace that have been filtered by the Chebyshev filter. Typically such an array has an a-priori unknown high condition number that directly influences the choice of QR-factorization algorithm. In this work we show how such condition number can be bound from above with precise and inexpensive estimates. We then proceed to use these estimates to implement a mechanism for the choice of QR-factorization in the ChASE library. We show how such mechanism enhance the performance of the library without compromising on its accuracy.     
\end{abstract}

\begin{keywords}
  Chebyshev polynomials, Subspace Iteration, Filtered vectors, Condition number
  QR-factorization, numerical accuracy, High-performance Computing.
\end{keywords}

%\begin{AMS}
%1.1, 1.2, 1.3
%\end{AMS}

\pagestyle{myheadings}
\thispagestyle{plain}
\markboth{
E.~DI NAPOLI, X.~WU}{CONDITION NUMBER ESTIMATE AND OPTIMAL QR-FACTORIZATION}

%-----------------------------------
% Content starts here
%-----------------------------------

\section{Introduction}
\label{sec-1}

\begin{comment}
\textit{
\begin{itemize}
    \item STATEMENT: Initial paragraph stating the problem that the paper addresses and its proposed solution including its novelty and its applications. {\bf DONE}
    \item SOTA: Give a concise description of the state of the art of Chebyshev accelerated subspace iteration with many references to scientific papers, HPC optimizations, applications to domain software and stand alone libraries. {\bf DONE}
    \item SUBSECTION: brief description of the general subspace iteration + Chebyshev filtering and RR-projection. Introduce the ChASE library specialized algorithm, its optimizations (polynomial degree, spectral bounds), its emphasis on HPC and parallelism. {\bf DONE} 
    \item SUBSECTION: Focus on QR-factorization. Enlist the many algorithms for factorizing tall and skinny matrices. Describe the stability and the performance aspects of each of them. Emphasize the issues that accompany the CholeskiQR and how they have been addressed in the last 10 years. Describe how the Chebyshev filtering may make the use of CholeskiQR unfeasible if not appropriately controlled. State in details the goal of the paper and the results that achieves. State how such results can be used to switch from different QR-algorithms including the various flavors of CholeskiQR and how the switching mechanism can be used to enhance performance in ChASE. {\bf DONE}
    \item Describe the structure of the manuscript and its original contributions. {\bf DONE}
\end{itemize}
}
\end{comment}

Subspace iteration with Chebyshev acceleration is among the earliest and most influential iterative techniques developed for computing a selected portion of the spectrum of symmetric (Hermitian) algebraic eigenvalue problems, and it remains a cornerstone of modern large-scale eigensolvers. A trial subspace is first enhanced through a polynomial filtering step that amplifies the components associated with the desired eigenvalues, after which the resulting vectors are orthonormalized and used to construct a reduced eigenvalue problem via a Rayleigh–Ritz projection. This reduced problem is then solved using standard dense diagonalization techniques (e.g., divide-and-conquer algorithm), and the resulting approximate eigenpairs are projected back to the original space and tested for convergence. 

From a computational standpoint, this workflow is dominated by two key computations: the filtering step, which can be efficiently expressed as repeated matrix–matrix multiplications (GEMM) and therefore maps well to modern hardware, and the orthonormalization step, which is typically performed using the Householder QR algorithm and constitutes the second most expensive operation. Unlike the filter, Householder QR relies heavily on lower-level BLAS routines and is less amenable to efficient high-performance implementations. A natural alternative is the CholeskyQR algorithm, which is rich in BLAS-3 operations and can be implemented in a manner similar to the Chebyshev filter. However, CholeskyQR is notoriously sensitive to the condition number of the matrix of vectors to be orthonormalized, a difficulty exacerbated by the fact that polynomial filtering tends to produce highly correlated subspaces. Recent variants such as CholeskyQR2 \cite{fukaya2014choleskyqr2} and shifted 
CholeskyQR2 \cite{fukaya2020shifted} offer promising alternatives, provided reliable information about the condition number of the subspace is made available. In this work, we present a detailed numerical analysis of the conditioning of the filtered subspace under an exhaustive range of scenarios and show how this information can be exploited to safely modify the 
orthonormalization step within subspace iteration. We implement the resulting strategy in 
the ChASE eigensolver library and demonstrate that it delivers both numerical robustness and significant performance gains on modern architectures.

\subsection{Filtered subspace iteration: its inception and modern revival}
One of the the first explicit mention in the scientific literature of
subspace iteration applied to the solution of the symmetric algebraic
eigenvalue problems is by L.~Bauer in
1957~\cite{Bauer:1957hv}. Following this work, as famously mentioned
by Parlett~\cite{Parlett:1998tv}, eigensolvers based on subspace
iteration [ ... where developed in the 1960s and 1970s when the
Lanczos algorithm was under the cloud]. What shaped subspace
iteration in its modern form is the fundamental work of Rutishauser in
a number of papers spanning from 1969 to
1971~\cite{Rutishauser:1969ub, Rutishauser:1971wc,
  Rutishauser:1970wc}. The first paper of this
series~\cite{Rutishauser:1969ub} builds on Bauer's Simultaneous
Iteration method and introduce many of the key ideas included in
modern implementations of eigensolvers based on subspace
iteration. Soon after is groundbreaking work, Rutishauser published
two similar papers with additional details of the algorithm (named {\it
  ritzit}) which are at the base of its robustness and
efficacy~\cite{Rutishauser:1970wc, Rutishauser:1971wc}.

Soon after Rutishauser's first publication, several authors
contributed to improve his initial algorithm. For instance, the Ritz
iteration, which projects the eigenproblem onto the search space using
a Jacobi step, was upgraded by Stewart to a Rayleigh-Ritz step so as
to avoid to deal with non-positive definite
matrices~\cite{Stewart:1969cq}. Stewart rigorously
demonstrated the enhanced convergence of the eigenpairs when
orthogonal iteration (subspace + orthonormalization) is used in
conjunction with Rayleigh-Ritz. Concurrently to these development a
version of subspace iteration was developed by civil engineers
proposing an algorithm very similar to Rutishauser's, but their
exposition is less clear-cut and uses a language and formalism
somewhat different from conventional numerical
analysts~\cite{Clint:1970wz}. The interested reader can find a clear
review of the improved {\it ritzit} algorithm in Parlett
book~\cite{Parlett:1998tv}.

Alongside these developments, several authors generalized subspace
iteration to non-Hermitian and unsymmetric eigenproblems
(see~\cite{Stewart:1976vaa} for an example). In the intervening years
up to the last decade, the development of iterative eigensolvers for
the Hermitian eigenvalue problem took a different direction due to the
resurgence of the Lanczos algorithm and its
variations~\cite{Cullum:1974fe, Parlett:1979vf, Cullum:1985wj,
  Wu:2000vq, Davidson:1975wf, Crouzeix:1994wh, Ruhe:1984ep,
  Stewart:2002tw}. Subspace iteration eigensolvers saw a revival in
popularity starting in the middle of the 2000s with application to
electronic structure theory, first in the context of sparse
eigenproblems~\cite{Zhou:2006ek, Zhou:2014fe}, and later also for
sequences of dense eigenproblems~\cite{Berljafa:2014jv, Levitt:2015wc}.
This coming back is due in part to the emerging need of solving for
just the full subspace without resolving the single
eigenpairs~\cite{Zhou:2006ek} and in part to the ability of the
eigensolver to receive as input approximate eigenvectors, and in doing
so, decrease considerably its time-to-convergence~\cite{DiNapoli:2012uy}.

The last decade has witnessed an increased use of filtered subspace iteration in new advanced methods of electronic structure computations. A Chebyshev filtered subspace iteration has been used to power a discontinuous Galerkin approach to solve the Kohn-Sham equation using a adaptive local basis set \cite{Banerjee2016, Banerjee2018}. Similarly, an early version of the ChASE library \cite{Winkelmann2019chase} was integrated to solve Excitonic Hamiltonian emerging from the Tamm-Dancoff approximation (TDA) of the Bethe-Salpeter equation \cite{zhang2021solving}. It has been also shown that Chebyshev Filtered subspace returns high-performance gains on many cores \cite{Kreutzer2018}. More recently, our team has endevoured in a substantial effort to develop the ChASE library into a full-fledged accelerated and distributed parallel library \cite{Wu2022chase,Wu2023chase} with further extensions to pseudo-Hermitian eigenproblems \cite{dinapoli2026chase} arising from Bethe-Salpeter equation beyond TDA.

\subsection{A modern eigensolver based on Chebyshev Accelerated
  Subspace iteration} 
\label{sec-1-4}
Let $A$ be a symmetric or complex Hermitian matrix of size $n$, having $n$
real eigenvalues ordered such that the relation
$\lambda_1\leq \lambda_2\leq \ldots \leq\lambda_n$ holds. The
ascending ordering, which is the opposite of what is traditionally
found in the literuture, is inspired by application to electronic
structure calculations but it is otherwise equivalent to the more
common descending ordering description. We are interested in the first 
{\sf nev} eigenpairs---eigenvalues and their corresponding eigenvectors
$(\lambda_1,x_1),(\lambda_2,x_2),\ldots,(\lambda_\nev,x_\nev)$---for $\nev\ll n$.
The choice for the wanted part of the eigenspectrum is arbitrary but
is in line with the examples presented in this work. Let us just
remark that subspace iteration with a polynomial filter can be used
equivalently for the highest part of the spectrum and not only to
compute the smallest eigenpairs.

\newlength{\dlen} \settowidth{\dlen}{{\sc Check for convergence}} 
\newlength{\ylen} \settowidth{\ylen}{$Y$}
\newlength{\clen}
\settowidth{\clen}{\small{\sc Deflation \& Locking} (Start)}

\begin{algorithm}[h!t]
  \caption{ChASE: Chebyshev Accelerated Subspace iteration Eigensolver}
  \label{alg:ChASE}
  \begin{algorithmic}[1]
  \Require Matrix $A$, {\it (optional) vector matrix 
  $ \subit{V}{0} \equiv \left[
    v^{(0)}_1, \ldots, v^{(0)}_{\nev} \right]$} 
  \Ensure $\nev$ extremal eigenpairs $\left(\Lambda,Y\right)$ with
  $\Lambda = \diag(\lambda_1, \ldots,\lambda_\nev)$ and $Y = \left[
    y_1, \ldots, y_{\nev}\right]$.
  %\item[]
  \State Set the size of search space $\ell > \nev$ 
  \State Initialize $\subit V{0} \gets {\rm randn}(n,\ell)$ \label{lst:line:init} %\left[ \subit{V}{0}, {\rm randn}(n,\ell-k) \right]$
  \State Estimate $\lambda_1$, $\lambda_{\ell}$ and $\lambda_n$. \label{lst:line:lanczos} 
  \Comment{\parbox{\clen}{\sc Lanczos}}
  \While{size$(Y) < \nev$}
  \State Filter the vectors, $\subit V{i} \gets \p i (A) \subit V{i-1}.$ \label{lst:line:cheby} 
  \Comment{\parbox{\clen}{\sc Chebyshev filter}}
  \State Orthonormalize $\subit Q{i} \gets \big[Y\; \subit V{i}\big] \inv{\subit{R}{i}}.$ \label{lst:line:QR} 
  \Comment{\parbox{\clen}{\sc QR algorithm}} 
  \State Compute Rayleigh quotient $\tilde{A} = \her{\subit Q{i}} A
  \subit Q{i}.$ \label{lst:line:rrstarts} 
  \Comment{\parbox{\clen}{{\sc Rayleigh-Ritz} (Start)}}
  \State Solve the reduced problem $\tilde{A} \subit W{i} =\subit W{i}
  \subit{\tilde{\Lambda}}{i}$.
  \State Compute $\subit V{i} \gets \subit Q{i} \subit W{i}.$ \label{lst:line:rrends}
  \Comment{\parbox{\clen}{{\sc Rayleigh-Ritz} (End)}}
  \For{$a=\textsf{converged} \to \nev$ }  
    \If{${\rm Res}(\subit{V_{:,a}}{i},\tilde{\lambda}_a)<\textsc{tol}$}
    \Comment{\parbox{\clen}{{\sc Residuals}}}
    \State $\parbox{\ylen}{$\Lambda$} \gets \big[\Lambda\;\tilde{\lambda}_a\big]$
    \State $\parbox{\ylen}{$Y$} \gets \big[Y\;\subit{V_{:,a}}{i}\big]$ 
    \Comment{\parbox{\clen}{{\sc Deflation \& Locking}}} \label{lst:line:dlstarts}
    \EndIf
    \EndFor 
    \State $\subit V{i} \gets \big[\subit{V_{:,{\sf nonconverged}: \ell}}{i}\big]$ \label{lst:line:dlends}
  \EndWhile
\end{algorithmic}
\end{algorithm}

The ChASE algorithm Alg.~\ref{alg:ChASE} is a more sophisticated
version of Rutishauser subspace iteration. The initial
input requires only the matrix $A$ but can also accept a set of vectors
$\subit V{0}$ which span the desired subspace. When specified as input, $\subit V{0}$ represents approximate solutions (e.g., when solving for a sequence of eigenproblems). When not specified, $\subit V{0}$ are initialized with normal random vectors ({\tt line~\ref{lst:line:init}}). The order $\ell$ of $\subit V{0}$
is such that the search space is larger (but not much larger) than the
desired number of eigenpairs $\nev$.

%It is important to stress that while
%the full searched space is filtered by the Chebyshev polynomial, we
%are concerned only with the convergence properties of the subspace
%spanned by $k$ vectors $\subit{V_1}{i} \subset \subit{V}{i}$. 

%As mentioned at the beginning of Section~\ref{sec-2-1}, in opposition
%to Algorithm~\ref{alg:SubIt}, the desired part of the eigenspectrum is
%the lowest one. Notice that, thanks to the properties of the Chebyshev
%filter, solving for the highest part of the spectrum corresponds to
%just substitute $A$ with $-A$.  

In order to work efficiently the
Chebyshev filter requires estimates for $\lambda_1, \lambda_\ell$ and
$\lambda_n$. Only the latter has to be really bounded from above; the
first two numbers can be estimated without compromising the
effectiveness of the algorithm. All three values can be provided as
part of the input or can be computed by running few Lanczos iterations
({\tt line~\ref{lst:line:lanczos}}). Each time the body of the while
loop is repeated, the vectors $\subit V{i}$ are filtered ({\tt
  line~\ref{lst:line:cheby}}) by a Chebyshev polynomial with variable
degree $m_i$. The resulting vectors are orthonormalized using a
QR algorithm ({\tt line~\ref{lst:line:QR}}): this is the step which is the focus of this manuscript.  
The orthonormalization is followed by a
Rayleigh-Ritz projection ({\tt line~\ref{lst:line:rrstarts}}) at the
end of which eigenvector residuals are computed,
converged eigenpairs are deflated and locked ({\tt
  line~\ref{lst:line:dlstarts}}) while the non-converged vectors are
sent again to the filter to repeat the whole procedure ({\tt
  line~\ref{lst:line:dlends}}).
%\subsection{Locking, Deflation and Orthonormalization}
This last step ensures that eigenpairs that already satisfy the stopping criterion
are not further filtered which would unnecessarily increase the 
number of floating-point operations (FLOPs) performed by ChASE. 
%Therefore, 
%it is good practice to remove the converged eigenvectors and eigenvalues 
%from the arrays $V^{(i)}$ and $\Lambda^{(i)}$, and store them in the corresponding
%output arrays $Y$ and $\Lambda$ ({\tt line~\ref{lst:line:dlstarts}-\ref{lst:line:dlends}}). 
Only the remaining non-converged vectors are passed to the filtering step for
further refinement.

The orthonormalization phase is a key step of the algorithmic process. 
In the Hermitian case, ChASE not only orthonormalizes
the current search subspace but also ensures orthogonality against all previously
converged eigenvectors. This step enhances convergence by removing components 
of the space that correspond to already converged directions. Consequently, in 
ChASE, the QR factorization is always applied to a matrix of size $n\times \ell$
regardless of how many eigenpairs have already converged and been locked.
For numerical robustness, this QR factorization is conventionally performed 
using Householder QR implementation provided by LAPACK or ScaLAPACK. Numerical
stability in this step is crucial for the convergence behavior of ChASE, particularly 
during the initial iterations when the Chebyshev-filtered vectors can be 
highly correlated. While stability is an important ingredient, the ChASE library pays a price in terms of  performance due to the use of a less than ideal algorithm (Householder QR).
This is compounded by the observation that the QR factorization becomes increasingly
important as more eigenpairs converge and are locked during the iteration process.
When computing more than a small fraction of the spectrum, the orthonormalization
step can thus become a critical performance bottleneck \cite{Wu2022chase}.

\subsection{Replacing Householder QR with Communication-Avoiding CholeskyQR Variants}\label{sec:Replacing Householder QR with Communication-Avoiding CholeskyQR Variants}

On distributed memory architectures, and particularly on GPUs, the traditional Householder QR factorization often becomes a major performance bottleneck. Although Householder QR provides excellent numerical stability, its performance is primarily constrained by communication rather than computation. The algorithm requires frequent synchronization and numerous small data exchanges between processes, especially during panel factorizations. These synchronization points introduce latency that scales poorly with the number of processes and are even more detrimental on GPU-based systems, where the deep memory hierarchy and kernel-launch overheads amplify the cost of fine-grained communication. Consequently, as the number of locked eigenpairs increases in ChASE and the subspace to be orthonormalized grows, the cumulative cost of Householder QR can dominate the overall runtime \cite{Wu2022chase}.

To alleviate this bottleneck, we replaced the traditional Householder QR with communication avoiding (CA) variants of CholeskyQR~\cite{fukaya2014choleskyqr2,fukaya2020shifted}, which are more amenable to distributed and GPU-based architectures. In each iteration of ChASE, a QR factorization is performed on a rectangular matrix $\subit V{i} \in \mathbb{C}^{n \times k}$ with $n > k$. Among the available CAQR methods, we favor CholeskyQR over the tall-skinny QR (TSQR)~\cite{demmel2008communication}, even though both exhibit similar asymptotic communication costs. The main advantage of CholeskyQR lies in its simplicity: it relies on an additive reduction operation, while TSQR requires a QR factorization of a small matrix at every reduction step~\cite{fukaya2014choleskyqr2}. This difference simplifies the implementation and improves performance on hierarchical architectures, especially when efficient collective operations such as \texttt{MPI\_Allreduce} are available. Moreover, TSQR provides a clear performance advantage over ScaLAPACK’s Householder QR primarily when $n \gg k$~\cite{ballard2015reconstructing}, which is not typically the case in ChASE. For instance, in condensed-matter simulations, ChASE often targets about 10\% of the spectrum, leading to a moderate rather than an extreme $n/k$ ratios.

Although CholeskyQR delivers superior performance and scalability, it suffers from a loss of orthogonality when the condition number of $\subit V{i}$ increases. This loss can be mitigated by performing the procedure twice, yielding the CholeskyQR2 algorithm~\cite{fukaya2014choleskyqr2} (see Algorithm \ref{alg:cholqr}). However, CholeskyQR2 remains limited by the requirement that the Cholesky factorization of the Gram matrix $R = X^H X$ succeeds, which only holds when the condition number $\kappa_2(X)$ does not exceed $\mathcal{O}(\mathbf{u}^{-1/2})$, where $\mathbf{u}$ is the unit round-off and $\kappa_2(X) = \frac{\sigma_{\max}(X)}{\sigma_{\min}(X)}$.

\begin{algorithm}[htbp]
\caption{CholeskyQR for $X = QR$} \label{alg:cholqr}
\begin{algorithmic}[1]
  \Function{CholeskyQR}{$X$, {\sf cholDeg}}
    \For{$i = 1, \dots, ${\sf cholDeg}}
      \State $R \leftarrow X^H X$
      \State $[R, {\sf info}] \leftarrow \textsc{Cholesky}(R)$
      \If{${\sf info} \neq 0$}
        \State \Return {\sf Failure}
      \EndIf
      \State $X \leftarrow X R^{-1}$
    \EndFor
    \State \Return $X$
  \EndFunction
\end{algorithmic}
\end{algorithm}

To overcome this limitation, Fukaya et al.~\cite{fukaya2020shifted} proposed the shifted CholeskyQR2 (or $s$-CholeskyQR2) algorithm, which introduces a preconditioning step that shifts the Gram matrix $R$ by a small multiple of the identity. This shift effectively reduces the condition number of $X$, extending the applicability of CholeskyQR2 to matrices with $\kappa_2(X)$ up to $\mathcal{O}(\mathbf{u}^{-1})$.
Table~\ref{tab:cond_for_qr} summarizes the condition number ranges for each of the QR variants. This characterization enables the dynamic selection of the most appropriate QR factorization variant to balance performance and numerical accuracy.

\begin{table}[htbp]
\footnotesize
\renewcommand{\arraystretch}{1.}
\caption{Maximal acceptable condition numbers for different QR variants. $\mathbf{u}$ denotes the unit round-off.}
\label{tab:cond_for_qr}
\centering
\begin{tabular}{c|c|c|c|c}
\toprule
& CholeskyQR & CholeskyQR2 & $s$-CholeskyQR2 & HHQR \\
\midrule
$\kappa_2(X)$ & $\mathcal{O}(1)$ & $\mathcal{O}(\mathbf{u}^{-1/2})$ & $\mathcal{O}(\mathbf{u}^{-1})$ & Any \\
\bottomrule
\end{tabular}
\end{table}

Given its favorable performance characteristics, employing CholeskyQR and its variants within the ChASE library is highly attractive. At the same time, preserving numerical accuracy is essential and cannot be compromised. Achieving both performance and stability hinges on obtaining a reliable estimate of the condition number of the filtered vectors, $\kappa_2(V^{(i)})$. Directly computing this quantity is prohibitively expensive and would negate the performance gains offered by CholeskyQR. In this work, we demonstrate an alternative approach that provides a lightweight yet reliable estimate of the condition number without incurring significant overhead. We first present the resulting strategy and explain how it is used to adapt the QR step within the algorithm. The remainder of the paper is devoted to a rigorous derivation of these results and to numerical experiments that illustrate their impact on both the stability and performance of the ChASE library.

\subsubsection{Condition number estimates and dynamic CAQR}

For the simplest case when ChASE converges in one single step and does not optimize the polynomial degree (one degree for all vectors), we bound the condition number of the filtered vectors $\subit V{i}$ such that
\begin{equation}
\label{eq:simple-est}
\cd \left(\subit V{i} \right) \leq \eta \rto 1{m},
\end{equation}
where $\rto 1{m}$ is the convergence ratio of the smallest eigenpair (see Definition \ref{def:conv_ratio}). When polynomial degree optimization is enabled, the filtering degree is adapted for each single column of the array $\subit V{i}$. In this case, the exponent in $\rto 1{m}$ is substituted with the highest polynomial degree which typically corresponds to the last vector of the search space $m_\ell$. 
In the case when vectors are deflated and locked the expression for the bound gets further modified to
\begin{equation}
\label{eq:full-est}
\cd (\subit V{i}) \lesssim \eta \rto {k+1}{m_{k+1}} \rto 1{m_\ell-m_{k+1}}. 
\end{equation}
where $m_k$ is the Chebyshev polynomial degree corresponds to the Ritz pair of index $k$, and $\subit V{i} \gets \big[Y\; \subit V{i}\big]$ is the rectangular matrix to be factorized, including the converged eigenvectors $Y$
and newly filtered subspace .

These estimations rely solely on data already available within ChASE---namely, the spectral bounds $(c, e)$, the computed Ritz values $\Lambda$, the optimized filter degrees $m_i$, and the number of locked eigenpairs $k$. 
%\textsf{locked}. 
As already mentioned, these estimations are based on a detailed spectral analysis of the Chebyshev-filtered subspace and constitute the central contribution of this paper which is exposed in the following sections.
Based on the estimated condition number Eqs.~\eqref{eq:simple-est} and \eqref{eq:full-est}, we select the appropriate CholeskyQR variant following the heuristic in Algorithm~\ref{alg:caqr}. For double precision, if the estimated condition number exceeds $10^8$, the $s$-CholeskyQR2 variant is employed. For small condition numbers (below $20$), standard CholeskyQR suffices, while intermediate cases use CholeskyQR2. For robustness, a fallback mechanism (Algorithm~\ref{alg:caqr}, line~\ref{alg:line:scalapack_qr}) ensures that any failure in $s$-CholeskyQR2$ $ safely reverts to ScaLAPACK’s Householder QR routine.

\begin{algorithm}[htbp]
\caption{Dynamic CAQR selection in ChASE}\label{alg:caqr}
\begin{algorithmic}[1]
  \Function{Dynamic-CAQR}{$X$, \textsf{estCond}}
    \If{\textsf{estCond} $> 10^8$} \Comment{High condition number: use $s$-CholeskyQR2}
      \State $R \leftarrow X^H X$
      \State ${\sf norm} \leftarrow \|X\|_F$
      \State $s \leftarrow 11 (mn + n(n+1)) \, \mathbf{u} \, {\sf norm}$
      \State $[R, {\sf info}] \leftarrow \textsc{Cholesky}(R + sI)$
      \If{${\sf info} \neq 0$} \Comment{Cholesky failed, fallback to Householder QR}
        \State $X \leftarrow \textsc{HouseholderQR}(X)$ \label{alg:line:scalapack_qr}
      \Else
        \State $X \leftarrow X R^{-1}$
        \State $X \leftarrow \textsc{CholeskyQR}(X, \textsf{cholDeg}=2)$
      \EndIf
    \ElsIf{\textsf{estCond} $< 20$} \Comment{Well-conditioned: use single CholeskyQR}
      \State $X \leftarrow \textsc{CholeskyQR}(X, \textsf{cholDeg}=1)$
    \Else \Comment{Moderate conditioning: use CholeskyQR2}
      \State $X \leftarrow \textsc{CholeskyQR}(X, \textsf{cholDeg}=2)$
    \EndIf
    \State \Return $X$
  \EndFunction
\end{algorithmic}
\end{algorithm}

In section \ref{sec-2}, we describe the spectral properties of the filtered vectors $\subit V{i}$ both in the case of generic polynomial degree and when such a degree is optimized for each single filtered vector. Section \ref{sec-3} provides a numerical analysis of the bounds for $\cd (\subit V{i})$ is all possible cases. Section \ref{sec-4} is devoted to numerical tests of the condition number bounds, comparison of dynamic CAQR with the traditional QR decomposition and some scaling tests. In section \ref{sec-5} we summarize and conclude.  

%%% Local Variables: 
%%% mode: latex
%%% TeX-master: "main"
%%% End:

\section{Spectral decomposition of the subspace filtered by Chebyshev polynomials}
\label{sec-2}
In this section, we present a formal characterization of the subspace formed by vectors after the application of the filtering process. We begin by recalling the asymptotic properties of Chebyshev polynomials and then describe how these polynomials are employed to filter a matrix of vectors. Next, we introduce the notion of the convergence ratio associated with each eigenpair and demonstrate how these ratios naturally appear in the spectral decomposition of the filter. This framework allows us to express the filtered vectors through a matrix decomposition that clearly separates the components associated with the desired portion of the spectrum from those corresponding to the undesired part.

\subsection{Chebyshev polynomials and their asymptotic properties}
\label{sec-2-2}

In order to address the general convergence properties of the
Chebyshev acceleration of the subspace iteration algorithm, let us
briefly recall the definition of the Chebyshev polynomials of the
first kind and briefly describe their asymptotic properties.
\begin{definition}[Chebyshev Polynomials]
  The Chebyshev Polynomials $C_m(t)$ of degree $m$ on the interval
  $\left[-1,1\right]$ are defined as
	\begin{equation}
	C_m(t)=\cos\left(m\cos^{-1}(t)\right), \quad \left| t\right| \leq 1
	\label{eq:polydef}
	\end{equation} 
The domain of definition can be extended to the entire real axis by
	\begin{equation*}
	C_m(t)=\cosh\left(m\cosh^{-1}(t)\right), \quad t\in\R
	\end{equation*}
\end{definition}
It customary and more practical to compute Chebyshev polynomials
through a 3-terms recurrence relation, which can be easily derived
from Prosthaphaeresis formula for the sum of two cosines (generalized
to hyperbolic cosine using the Osborn's rule)
\begin{equation}
C_{m+1}(t)=2tC_m(t)-C_{m-1}(t)\ \qquad \forall\ t\in \R.
\label{eq:recur}
\end{equation}
This recurrence relation together with the direct computation of
$C_0(t)=1$ and $C_1(t)=t$ from eq.~(\ref{eq:polydef}) implies the
Chebyshev Polynomials of any degree $m$ are indeed polynomials in $t$
whose maximal degree is $m$.  While on the interval $[-1,1]$ a
polynomial of degree $m$ reproduces an oscillating function with $m-1$
extrema, for $|t| > 1$ the function diverges quite rapidly already for
modest values of $m$.  This behavior can be easily quantified by
expressing $C_m(t)$ as an implicit function of $t$.
Let us define $\rho \doteq \exp(y)$, and write $\cosh(y)=\frac{\rho+\rho^{-1}}{2} = t$, 
we obtain
\begin{equation*}
\rho^2-2t\rho+1=0
\quad \Rightarrow \quad \rho=t \pm \sqrt{t^2-1},
\end{equation*}
which implies that both $\rho$ and $\rho^{-1}$ are admissible solutions. By convention we 
choose 
\[
	|\rho| = \max_{\{\pm\}} \left|t \pm \sqrt{t^2-1}\right| \ , 
        \quad |\rho|^{-1} = \min_{\{\pm\}} \left|t \pm \sqrt{t^2-1}\right|
        \quad \textrm{for} \quad |t| >1 
\]
plug back in the definition of $\rho$ in order to express $y$
\[
	y = \acosh(t)=\ln\left|\rho\right|.
\]
Now the polynomial can be re-written as
\begin{equation}
	C_m(t)=\cosh\left(m\ln\left|\rho\right|\right) =
%\frac{\exp\left(m\ln|\rho|\right)+\exp\left(-m\ln|\rho|\right)}{2}=
        \frac{|\rho|^m+|\rho|^{-m}}{2}.
\label{eq:polyas}
\end{equation} 
Since $|\rho|$ is, by definition, always bigger than one, the leading
asymptotic behavior of the polynomial for $|t|>1$ is given by
$C_m(t) \sim \frac{|\rho|^m}{2}$. In other words outside of the
interval $[-1,1]$, $C_m(t)$ diverges as a polynomial of degree
$m$. Incidentally inside this interval, the full expression for
$C_m(t)$ is still valid. The only difference is that now $\rho$
assumes complex values and so does the inverse hyperbolic function
\[
	\rho = t \pm i \sqrt{1-t^2} \quad \Rightarrow \quad \acosh(t) = \arg(t \pm i \sqrt{1-t^2}) = y,
\]
where $y$ is nothing else than the angle corresponding to a point on a
circle of unit radius in the complex plane. Then eq.~(\ref{eq:polyas})
now indicates the maximum value of $C_m(t)$ for $|t|<1$, namely 1.
The following theorem~\cite{Saad:2011tu} provides the base for
the Chebyshev acceleration of subspace iteration.
\begin{theorem}
  Let $\left|\gamma\right|>1$ and $\PP_m$ denote the set of
  polynomials of degree smaller or equal to $m$. Then the extremum
	\begin{equation*}
	\min_{\{p\in \PP_m,p(\gamma)=1\}} \max_{\{t\in [-1,1]\}} {\left|p(t)\right|}
	\end{equation*}
is reached by
	\[
	{\rm p}_m (t) \doteq \frac{C_m(t)}{C_m(\gamma)}.
	\]
\label{th:approx}
\end{theorem}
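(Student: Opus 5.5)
The plan is the classical equioscillation and root-counting argument for the Chebyshev minimax property. First observe that ${\rm p}_m$ is admissible. It is a polynomial of degree $m$ by the recurrence~\eqref{eq:recur}. It is well defined because $|\gamma|>1$: by eq.~\eqref{eq:polyas} (applied with $|t|$ and the parity $C_m(-t)=(-1)^mC_m(t)$) we have $|C_m(\gamma)| = \frac{|\rho|^m+|\rho|^{-m}}{2} > 1$, hence $C_m(\gamma)\neq 0$. Finally ${\rm p}_m(\gamma)=1$. On $[-1,1]$ we have $|C_m(t)|=|\cos(m\cos^{-1} t)|\le 1$, so
\[
\max_{t\in[-1,1]}|{\rm p}_m(t)| = \frac{1}{|C_m(\gamma)|},
\]
with equality attained.

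Next I would record the equioscillation points. Set $t_j=\cos(j\pi/m)$ for $j=0,\dots,m$. These are $m+1$ distinct points in $[-1,1]$ with $C_m(t_j)=\cos(j\pi)=(-1)^j$. Hence ${\rm p}_m(t_j)=(-1)^j/C_m(\gamma)$, so ${\rm p}_m$ attains its maximal modulus with alternating signs at these points.

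The core step is a proof by contradiction. Suppose some $q\in\PP_m$ with $q(\gamma)=1$ satisfies $\max_{[-1,1]}|q| < 1/|C_m(\gamma)|$. Let $r={\rm p}_m-q\in\PP_m$. Then $r(\gamma)=0$. At each $t_j$ the sign of $r(t_j)$ equals the sign of ${\rm p}_m(t_j)$, because $|q(t_j)|<|{\rm p}_m(t_j)|$. So $r$ alternates in sign (strictly nonzero) across the $m+1$ ordered points $t_m<\dots<t_0$. By the intermediate value theorem $r$ has at least $m$ distinct roots in the open interval $(-1,1)$. Together with the root $\gamma\notin[-1,1]$, this gives $m+1$ distinct roots for a polynomial of degree at most $m$. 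Hence $r\equiv 0$, i.e. $q={\rm p}_m$, contradicting the strict inequality. Therefore no admissible polynomial does better, and ${\rm p}_m$ attains the minimum.

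The only delicate point, and the one I expect to be the main obstacle if anything, is ensuring the sign alternation is strict. This is handled by assuming the competitor is strictly better, which is exactly what the contradiction needs. The argument also requires the root $\gamma$ to be distinct from the $m$ interior roots, which is guaranteed by $|\gamma|>1$.
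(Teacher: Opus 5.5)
The paper gives no proof of this theorem; it quotes the result from \cite{Saad:2011tu}. Your equioscillation and root-counting argument is the classical proof of this fact, and it is correct. A few points should be made explicit.

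The sign-alternation step only makes sense when $r$ is real-valued on $[-1,1]$, so you are tacitly using $\gamma\in\R$, which is the paper's intended setting. This assumption is essential and not cosmetic: for nonreal $\gamma$ with $|\gamma|>1$ the claim is false. For example, with $m=1$ and $\gamma=2i$, the admissible polynomial $(t+i/2)/(5i/2)$ has maximum $1/\sqrt{5}<1/2=\max_{[-1,1]}|{\rm p}_1|$. So the hypothesis doing the work is $\gamma\in\R\setminus[-1,1]$, not merely $|\gamma|>1$.

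If $\PP_m$ admits complex coefficients (the paper works over $\C$), you must also reduce to real competitors first. Replace $q$ by the polynomial $\tilde q$ whose coefficients are the real parts of those of $q$. Since $\gamma$ is real, $\tilde q(\gamma)=\mathrm{Re}\,q(\gamma)=1$. On $[-1,1]$ you have $|\tilde q(t)|=|\mathrm{Re}\,q(t)|\le|q(t)|$. Hence it suffices to rule out real competitors, for which your sign argument applies verbatim.

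Finally, set aside the trivial case $m=0$ (the only admissible polynomial is the constant $1$). There $t_j=\cos(j\pi/m)$ is undefined, and $|C_0(\gamma)|=1$ rather than $>1$. Only $C_m(\gamma)\neq 0$ is actually needed.
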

In plain words, let's fix a point $\gamma$ and consider all
polynomials which go through this point and which are maximal on a
certain interval that does not contain $\gamma$. The smaller of these
polynomials is uniquely realized by p$_m(t)$. Without affecting the
theorem statement, the definition of p$_m(t)$ can be extended to
generic $t$ but for our purpose it is most useful for $|\gamma|\geq|t|\geq  1$.

This polynomial can be straightforwardly generalized to a generic
interval $x \in [\alpha,\beta] \subset \R$. The following linear
transformation $t(x)=\frac{x-c}{e}$ maps the interval $[\alpha,\beta]$
onto $[-1,1]$ where $c=\frac{\alpha+\beta}{2}$ define the center of
the interval and $e = \frac{\beta-\alpha}{2}$ its half-width. For large
$m$ the leading order of $p_m(x)$ is then
\begin{equation}
\lim_{m\rightarrow \infty} {\rm p}_m(x_t) \doteq \frac{C_m(\frac{x_t-c}{e})}
{C_m(\frac{x_{\gamma}-c}{e})} 
\sim \frac{|\rho_{x_t}|^m}{|\rho_{x_{\gamma}}|^m}  \quad \textrm{with}
\quad |\rho_{x_{\gamma}}|=\max_{\{\pm:\ x_\gamma\notin
  [\alpha,\beta]\}} \left|\frac{x_\gamma-c}{e} \pm 
\sqrt{\left(\frac{x_\gamma-c}{e}\right)^2-1}\right|.
\label{eq:rho}
\end{equation}
When $x\in[\alpha,\beta]$ the asymptotic limit simplifies to p$_m(x)
\sim \frac{1}{|\rho_{x_{\gamma}}|^m}$ since
$\sup_{x\in[\alpha,\beta]} |\rho_x| = 1$. This asymptotic behavior is
crucial for the use of such polynomials as accelerators of the
subspace iteration.  

\subsection{Filtering vectors with Chebyshev polynomials}
\label{sec-2-3}

Theorem~\ref{th:approx} provides the base on which to build a
Chebyshev polynomial filter which enhances the convergence of the standard subspace iteration eigensolver. Initial input vectors $V$ are
filtered exploiting the $3$-terms recurrence relation written
implicitely as a function of the action of $A$ on the vectors $V$
\begin{equation}
\label{eq:3term}
C_{m}(V) = 2\ A\ C_{m-1}(V) - C_{m-2}(V) \quad ; 
\quad C_m(V) \eqdef C_m(A)\cdot V. 
\end{equation}
A polynomial filter improves the subspace iteration by manipulating
the initial subspace in such a way that the components in the unwanted
part of the spectrum relative to those in the wanted part are
dramatically suppressed. This is graphically illustrated in Figure \ref{fig:enhance} where the part of the eigenspectrum in red is mapped to the interval $[-1, 1]$ and corresponds to the unwanted part of the spectrum.

\begin{figure}[h]
\begin{tabular}{l @{\hspace*{1mm}} l}
	\begin{minipage}{0.48\textwidth}
                \begin{flushleft}
                  \pgfplotsset{every tick label/.append style={font=\Large}}
                  \pgfplotstableread{figs/plotdata.csv}{\myfile}
                  \begin{tikzpicture}[scale=0.68]
                    \begin{axis}[
                      xlabel=eigenspectrum,
                      ylabel=Chebyshev polynomial,
%                     ymode=log,
                      xtick={-2,-1,0,1},
                      xticklabels={$\lambda_1$,$\lambda_{\sf nev}$,$c$,$\lambda_{N}$},
                      ytick={0,5e+3,1e+4,1.5e+4},
                      legend style={at={(0.5,-0.3)},anchor=north}]
                      ]
                      \addplot[smooth, no marks, blue, very thick] table [x={ics},y={ipsilon}] {\myfile};
                      \addlegendentry{ Polynomial degree m = 6 }
                      \addplot[very thick, green, domain=-2:-1]{0*x};
                      \addplot[very thick, red, domain=-1:1]{0*x};
                 \end{axis}
               \end{tikzpicture}
             \end{flushleft}
           \end{minipage}
&
            \begin{minipage}{0.48\textwidth}
		\begin{flushleft}
                  \pgfplotsset{every tick label/.append style={font=\Large}}
                   \pgfplotstableread{figs/plotdats.csv}{\myfils}
                  \begin{tikzpicture}[scale=0.68]
                    \begin{axis}[
                      xlabel=eigenspectrum,
                      ylabel=Chebyshev polynomial,
                      ymode=log,
                      xtick={-2,-1,0,1},
                      xticklabels={$\lambda_{1}$,$\lambda_{\sf nev}$,$c$,$\lambda_{N}$},
                      ytick={0,1,1e+2,1e+4},
                      legend style={at={(0.5,-0.3)},anchor=north}]
                      ]
                      \addplot[smooth, no marks, blue, very thick]
                      table [x={ics},y={ipsilon}] {\myfils};
                      \addlegendentry{ Polynomial degree m = 6 }
                      \addplot[very thick, green,
                      domain=-2:-1]{0*x+0.01};
                      \addplot[very thick, red, domain=-1:1]{0*x+0.01};
                    \end{axis}
                  \end{tikzpicture}
                \end{flushleft}
              \end{minipage}
\end{tabular}
    \caption{Schematic use of the Chebyshev polynomial $C_m(\lambda)$ to enhance the components $V$ aligned to the eigenvectors corresponding to the desired portion of the spectrum $[\lambda_1, \lambda_{\sf nev}]$ (in green) of an Hermitian matrix $A$. Conversely the components of $V$ aligned with eigenpairs in the interval $[\lambda_{\sf nev}, \lambda_N]$ (in red) are suppressed. Here $\lambda_N$ indicates the largest eigenvalue of $A$.}
    \label{fig:enhance}
\end{figure}
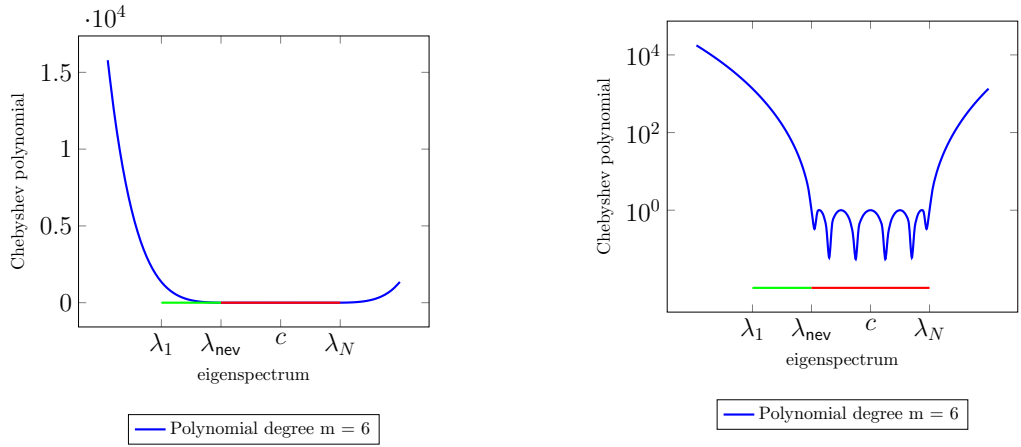

A value that quantify the amount of filtering operated by the Chebyshev polynomial is the convergence ratio defined through the variable $|\rho_a|$ where $a$ is the index of the eigenvalues outside of the interval $[\alpha,\beta]$.
\begin{definition}
  \label{def:conv_ratio}
  The {\bf convergence ratio} for the eigenvector corresponding to
  eigenvalue $\lambda_a$ is defined as
	\[
	\tau(\lambda_a) = |\rho_a|^{-1} = \min_{\{\pm :\lambda_a\notin
          [\alpha,\beta]\} }\left|\frac{\lambda_a-c}{e}
          \pm\sqrt{\left(\frac{\lambda_a-c}{e}\right)^2-1}\right|.
	\]
\end{definition}
The further away $\lambda_a$ is from the interval $[\alpha,\beta]$ the
larger is $|\rho_a|$ and the faster any filtered vector $v\in V$ should converge to
the eigenvector $x_a$ corresponding to eigenvalue $\lambda_a$. In practice the statement that $|\rho_a|^{-1}$ represents the
convergence ratio of $v$ is more a rule of thumb than a precise
statement: a rigorous demonstration requires a
substantial effort and it is not in the scope of this paper and our results do not depend on it. What is important to keep in mind is that $\forall \lambda_a \notin[\alpha,\beta]$\ , $|\rho_a| > 1$.

When dealing with numerical implementation of Chebyshev acceleration,
$\pl
(\lambda)=\frac{C_m\left(\frac{\lambda_a-c}{e}\right)}{C_m\left(\frac{\lambda_1-c}{e}\right)}$
is actually computed using the 3-term recurrence relation of
eq.~(\ref{eq:3term}) for the action of the numerator of $\pl (A)$ on the vectors $V$. The calculation of the constant denominator $\eta_m \doteq C_m(\frac{\lambda_1-c}{e})$ is carried out separately from to the matrix
numerator. By naming $\sigma_{m+1} \doteq \frac{\eta_m}{\eta_{m+1}}$ it is possible to recursively define
$\sigma_{m}$ 
\begin{equation}
\sigma_m  = \frac{1}{\frac{2}{\sigma_1}-\sigma_{m-1}}
\quad {\rm with} \quad
\sigma_1 = \frac{\eta_0}{\eta_1} =\frac{e}{\lambda_1-c}.
\end{equation}
With this definition in hand, the action of $\pl (A)$ on $V$ can be expressed as a specialized 3-term recurrence relation 
\begin{eqnarray*}
  \pl (A)V &=& \frac{C_{m}\left(\frac{A-cI}{e}\right)}{\eta_{m}}V \\
          &=& \frac{2\, \frac{A-cI}{e}\,C_{m-1}\left(\frac{A-cI}{e}\right)-
              C_{m-2}\left(\frac{A-cI}{e}\right)}{\eta_{m}} V\\
          &=& 2 \, \frac{A-c}{e}\, \frac{\eta_{m-1}}{\eta_{m}} 
               \frac{C_{m-1}\left(\frac{A-cI}{e}\right)}{\eta_{m-1}}V  - 
               \frac{\eta_{m-1}}{\eta_{m}}\,\frac{\eta_{m-2}}{\eta_{m-1}}\, 
               \frac{C_{m-2}\left(\frac{A-cI}{e}\right)}{\eta_{m-2}}V \\
          &=& 2 \, \frac{A-cI}{e}\, \sigma_{m} \, p_{m-1}(A)V -
               \sigma_{m}\, \sigma_{m-1} \, p_{m-2}(A)V.
\end{eqnarray*}
In practical terms, the polynomial filter above is realized iteratively as illustrated in Algorithm~\ref{alg:filter}.
\begin{algorithm}
\caption{Chebyshev polynomial filter}
\begin{algorithmic}
\Require Matrix $A$, random vector $V$ and eigenvalue estimates $\tilde{\lambda}_1$ and $\tilde{\lambda}_{\sf nev}$.
\Ensure Filtered vectors $V^{(m)}$.\\
\State $c=\frac{\tilde{\lambda}_1+\tilde{\lambda}_{\sf nev}}{2}$\ ,  $e = \frac{\tilde{\lambda}_{\sf nev}-\tilde{\lambda}_1}{2}$\ , $\sigma_1 = \frac{e}{\tilde{\lambda}_1-c}$;\\
\State $V^{(1)} = \frac{\sigma_1}{e}\, (A - c\, I_n)\, V$;\\
\For {$i= 2 \to m$}\\
\State $\sigma_i= \frac{1}{\frac{2}{\sigma_1}-\sigma_{i-1}}$;\\
\State $V^{(i)}=\frac{2\sigma_{i}}{e}\left(A-c\, I_n\right)V^{(i-1)}-\sigma_{i-1}\sigma_{i}V^{(i-2)}$;\\
\EndFor
\end{algorithmic}
\label{alg:filter}
\end{algorithm}

As mentioned above, it can be shown that the component of the random array of vectors $V$ converges to the subspace spanned by the eigenvectors $x_a$ corresponding to $\lambda_a \notin[\alpha,\beta]$ with a $\tau_a$ convergence ratio. Roughly speaking, this observation implies that each vector $v_a \in V$ could in theory be filtered with a polynomial degree just high enough to ensure the convergence to $x_a$ within a specific threshold. In the ChASE library this mechanism is implemented using a polynomial degree $m$ which is an ordered array of degrees with the same dimension of $V$ instead of just one value for all. Algorithm \ref{alg:filter} {\tt for} loop is then slightly modified with $m \rightarrow m_a$ and the addition of an if statement that stops filtering vectors $v_a$ as soon as $m_a < i$. The resulting algorithm optimizes for the minimum number of operations necessary to reach convergence. 

\begin{comment}
{\it
\begin{remunerate}
  \item Introduced by Rutishauser, it uses estimates of first
    eigenvalue outside the spetrum slice (for
    pos. def. matrices). Later generalized to non
    pos. def. eigenproblems.
  \item Convergence criterion redefined
  \item bound for maximum polynomial degree also redefined
\end{remunerate}

Locking was suggested already by Rutishauser (1969) in order to
decrease complexity. Describe how the locking and deflating has a role
in
\begin{description}
\item[residuals bounds] for the converged vectors
\item[computations] reduction in converging the whole subspace
\end{description}
}
\end{comment}

\subsection{Spectral decomposition of the filtered vectors}
\label{sec-2-4}
In this section we provide a simple spectral decomposition of $\pl (A) V$ which will then be used to derive some of the results on the condition number estimate of $\pl (A) V$ in the following sections.
Using the spectral decomposition of $A = X \Lambda\ X^H$, let us represent the set of
initial vectors $\subit V{0}$ by projecting them over the exact eigenvectors $X =
\left(X_\ell X_3\ \right)$ which are respectively of order $\ell$, and $n-\ell$
\begin{equation*}
\subit V{0} \equiv X\ X^H \subit V{0} = X_\ell S_\ell + X_3 S_3
\end{equation*}
where $S_\ell \in \C^{\ell\times\ell}$, and $S_3 \in \C^{(n-\ell)\times\ell}$ (see~\cite{Stewart:2001id}). We want to emphasize here that the subspace of size $\ell$ can be further split in two subspaces $X_\ell = \left(X_1 X_2\right)$  with corresponding matrix of coefficients $S_1$ and $S_2$ with $S_1 \in \C^{k\times\ell}$, and $S_2 \in \C^{(\ell-k)\times\ell}$. This further subdivision is not used in the spectral representation below but it is useful to emphasize the difference between the subspace corresponding to the first \textsf{nev} eigenvalues and the remaining \textsf{nextra} in ChASE. In this paper such subdivision will also be useful when the considering the estimation of the condition number bound when some of the vectors are locked and deflated (see Sec. \ref{sec-3-3}).
Let us temporarily drop the superscript index of $V$ and consider such
decomposition for the vectors outputted by the filter (see {\tt
  line~\ref{lst:line:cheby}} of the algorithm~\ref{alg:ChASE}) at a generic $(i)$ iteration in the case of constant degree $m$
\begin{eqnarray}
\label{eq-2-2-1}
\pl (A) V = &\ 
%X \pl(\Lambda) X^H V = X_\ell \pl (\Lambda_1) S_\ell + X_3
%\pl (\Lambda_3)S_3 \\
%          = & \rowmat{X_1}{X_2} \sqrmat{\Gamma_1^m}{0}{0}{\Gamma_2^m} 
%              \colmat{S_1}{S_2} + X_3 \Gamma_3^m S_3
X_\ell \Gamma_\ell^m S_\ell + X_3 \Gamma_3^m S_3
\end{eqnarray}
where the diagonal matrices $\pl (\Lambda_\ell) \doteq \Gamma_\ell^m \in \C^{\ell\times \ell}$, and $\pl (\Lambda_3) \doteq \Gamma_3^m \in \C^{(n-\ell)\times(n-\ell)}$. Even for moderately
small values of $m$ these matrices are positive definite and so
invertible. Unless the choice of vectors $V$\footnote{in \csi the initial vectors $\subit V{0}$ are chosen to be randomly generated and orthonormal, the vectors $\subit V{i}$ are by construction orthonormal and typically have a large overlap with the filtered subspace} is
quite pathological, we can also safely assume that $S_\ell$ is of maximal rank and so invertible. 

The expression \eqref{eq-2-2-1} for $\pl (A) V$ can be further manipulated so as to make a clear distinction between the desired subspace spanned by $X_\ell$ and the rest,
\begin{eqnarray}
\label{eq-2-2-2}
\pl (A) V = %&\ \left(X_\ell \Gamma_\ell^m+ X_3 \Gamma_3^m S_3 \inv{S_\ell}\right) S_\ell
&\ \left(X_\ell + X_3 \Gamma_3^m S_3 \inv{S_\ell} \inv{\Gamma_\ell^m}\right)\Gamma_\ell^m S_\ell.
\end{eqnarray}
The $\Gamma$-matrices are diagonal and, already for moderately small values of $m$, have the simple expressions
\begin{eqnarray}
\label{eq:gammas}
\Gamma_\ell^{m}  =&\ \rto 1{-m} \times \diag (\rto 1{m}, \rto 2{m}, \dots, \rto \ell{m}) \nonumber \\ 
\inv{\Gamma_\ell^{m}}  =&\ \rto 1{m} \times \diag (\rto{1}{-m}, \rto 2{-m}, \dots, \rto \ell{-m}) \\
\Gamma_3^{m} =&\ \rto 1{-m} \times I. \nonumber
\end{eqnarray}

As illustrated at the end of Section \ref{sec-2-3}, in the case of degree optimization the polynomial degree $m$ is not an integer anymore but an array of integers $ m = (m_1, m_2, \dots, m_\ell)$, one for each of the column vectors of $V$, with $m_1\leq m_2 \leq \dots \leq m_\ell$. The expression $\pl (A)V$ then generalizes to signify
\[
\pl (A)V \equiv \left( \p{1} (A) v_1, \p{2} (A) v_2, \dots , \p{\ell}(A) v_\ell \right),
\]
where $V \equiv (v_1, v_2, \dots , v_\ell)$, with $v_a \in \C^n$. Despite the apparent complication, $\pl (A)V$ can still be expressed through $\Gamma$ matrices with the only difference that their expression in terms of the inverse of the ratio of convergence is modified to
\begin{eqnarray}
\label{eq:gammas-2}
\Gamma_\ell^{m}  =& \diag (1, \frac{\rto 2{m_2}}{\rto 1{m_2}}, \dots, \frac{\rto \ell{m_\ell}}{\rto 1{m_\ell}}) \nonumber \\ 
\inv{\Gamma_\ell^{m}}  =&\ \diag (1, \frac{\rto 1{m_2}}{\rto 2{m_2}}, \dots, \frac{\rto 1{m_\ell}}{\rto \ell{m_\ell}}) \\
\Gamma_3^{m} =&\ \rto 1{-m_\ell} \times I. \nonumber
\end{eqnarray}

\subsubsection{The case of locked and deflated vectors}
In the \csi library, as in any realization of the subspace iteration algorithm, all the steps from the filtering to the convergence check are repeated within a loop. When checking for convergence, each vector that has a residual below threshold is locked and deflated out of $V$ (see Algorithm \ref{alg:ChASE}). In this case, the spectral decomposition of the vectors to be re-orthogonalized takes a different structure that needs to be further analized.

Without loss of generality, let us start from the the assumption that at a certain iteration $k$ vectors have been locked in the array $W \in \C^{n\times k}$. At the next iteration, after the filtering step, the QR algorithm then orthogonalizes the matrix $Z \equiv [W\ \pl (A) V]$, with $V \in \C^{n\times(\ell-k)}$. As before $\pl (A)V$ obeys the spectral decomposition of Equation \ref{eq-2-2-2} with the only difference that the matrices $S_{\ell,3}$ have now only $\ell-k$ columns. For $W$ we can define the following decomposition
\begin{equation}
    W = X_\ell Q_\ell + X_3 Q_3 \quad \textrm{with}\ Q_\ell \in \C^{\ell\times k}\ \textrm{and}\ Q_3 \in \C^{(n-\ell)\times k}.
\end{equation}
Despite the similarities, and because of the convergence check, the matrices $Q_{\ell,3}$ have quite a different properties than the $S_{\ell,3}$. This is because each column vector $w_i \in W$ satisfies the residual equation   
\begin{equation}
\label{eq:res}
\|r_i\|_2 = \frac{\|Aw_i - \tilde{\lambda}_i w_i\|_2}{\|w_i\|_2} < \textsc{tol}.
\end{equation}
Moreover it is guaranteed \cite{Parlett:1998tv}[Sec-4.5] that each eigenvalue $\lambda_i$ lies in a interval defined by the computed eigenvalue $\tilde{\lambda}_i$ and the residual if the eigenvalues satisfy the uniform separation condition. In other words, if all interval $[\tilde{\lambda}_i - \|r_i\|_2, \tilde{\lambda}_i + \|r_i\|_2]$ are disjoint, it is guaranteed that $\tilde{\lambda}_i$ provide a unique approximation to $\lambda_i$\footnote{The case of clustered eigenvalues requires a bit more work and knowledge of the residual matrix of the clustered eigenvalues (see \cite{Parlett:1998tv}[Sec-11.5])}.

\begin{proposition}
Under the assumption that each eigenpairs $(w_i, \tilde{\lambda}_i)\quad i=1, \dots ,k$ satisfies the uniform separation condition and its residual $\|r_i\| < \textsc{tol}$, 
$\exists\ \zeta_i = \min_{j\neq i} |\lambda_j - \lambda_i|$ such that $\forall j \neq i$
\[
\|(Q_\ell)^2_{:\neq i,i}\|^2_2 ,\ \|(Q_3)^2_{:,i}\|^2_2 \leq \left(\frac{\textsc{tol}}{\zeta_i}\right)^2 
\]
\end{proposition}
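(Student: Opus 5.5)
The plan is to expand each locked vector $w_i$ in the eigenbasis and bound every coefficient except the dominant one using the residual. Assume, as in ChASE, that $\|w_i\|_2=1$. Write
\[
w_i = X X^H w_i = \sum_{j=1}^n \xi_{ji}\, x_j ,
\]
so that the $i$-th column of $Q_\ell$ is $(\xi_{1i},\dots,\xi_{\ell i})^T$ and the $i$-th column of $Q_3$ is $(\xi_{\ell+1,i},\dots,\xi_{ni})^T$. Since $X$ is unitary,
\[
\|r_i\|_2^2=\|(A-\tilde\lambda_i I)w_i\|_2^2=\sum_{j=1}^n |\lambda_j-\tilde\lambda_i|^2\,|\xi_{ji}|^2 < \textsc{tol}^2 .
\]
This single identity gives everything once we have a lower bound on $|\lambda_j-\tilde\lambda_i|$ for $j\neq i$.

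The second step supplies that lower bound from the uniform separation condition quoted from \cite{Parlett:1998tv}. The condition ensures that $\lambda_i$ is the unique eigenvalue in $[\tilde\lambda_i-\|r_i\|_2,\tilde\lambda_i+\|r_i\|_2]$. It follows that for $j\neq i$,
\[
|\lambda_j-\tilde\lambda_i|\ \ge\ |\lambda_j-\lambda_i|-|\lambda_i-\tilde\lambda_i|\ \ge\ \zeta_i-\|r_i\|_2 ,
\]
with $\zeta_i=\min_{j\neq i}|\lambda_j-\lambda_i|$ as in the statement. Dropping the $j=i$ term from the sum and inserting this bound gives
\[
\sum_{j\neq i}|\xi_{ji}|^2 \le \frac{\textsc{tol}^2}{(\zeta_i-\textsc{tol})^2}.
\]
The left side splits as $\|(Q_\ell)_{:\neq i,i}\|_2^2+\|(Q_3)_{:,i}\|_2^2$, so each of the two quantities in the proposition is controlled by this right-hand side.

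The main obstacle is removing the $-\textsc{tol}$ in the denominator so that we get exactly $(\textsc{tol}/\zeta_i)^2$. I would try two routes in order.

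\begin{itemize}
\item First, use the Rayleigh quotient property of the Ritz value: $\tilde\lambda_i=w_i^HAw_i$, so $\tilde\lambda_i$ is a convex combination of the $\lambda_j$ with weights $|\xi_{ji}|^2$. This lets one work directly with the gap to $\lambda_i$, as in the standard Davis--Kahan / $\sin\theta$ argument. That argument gives $\sin\angle(w_i,x_i)\le \|r_i\|_2/\delta_i$, where $\delta_i$ is the distance from $\tilde\lambda_i$ to the rest of the spectrum.
\item Second, if $\delta_i\ge\zeta_i$ cannot be obtained cleanly, accept the bound to first order in $\textsc{tol}/\zeta_i$. Since $\textsc{tol}\ll\zeta_i$ in practice, $(\zeta_i-\textsc{tol})^{-2}=\zeta_i^{-2}(1+\mathcal{O}(\textsc{tol}/\zeta_i))$, and the stated bound holds up to a negligible correction. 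This matches the ``$\lesssim$'' used in \eqref{eq:full-est}.
\end{itemize}

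Finally, I would note that $\sin^2\angle(w_i,x_i)=\sum_{j\neq i}|\xi_{ji}|^2$. This confirms that the squared entries of the $i$-th column of $Q_\ell$ off the diagonal, and of the whole $i$-th column of $Q_3$, are $\mathcal{O}((\textsc{tol}/\zeta_i)^2)$. This is the form needed when these columns are inserted into the condition number estimate of Sec.~\ref{sec-3-3}.
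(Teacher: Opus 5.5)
Your argument is the paper's argument. You expand $w_i$ in the eigenbasis, use $\|r_i\|_2^2=\sum_j|\lambda_j-\tilde\lambda_i|^2|\xi_{ji}|^2$, drop the $j=i$ term, and bound the remaining factors below by a gap. The paper finishes by writing $\ge\zeta_i\bigl[\sum_{j\neq i}|\xi_{ji}|^2\bigr]^{1/2}$. That silently uses $|\lambda_j-\tilde\lambda_i|\ge\zeta_i$ for $j\neq i$, which is exactly the step you identified as the main obstacle, and the paper gives no argument for it. Your triangle-inequality bound $\textsc{tol}^2/(\zeta_i-\textsc{tol})^2$ is the rigorous form of that step. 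You should state $\zeta_i>\textsc{tol}$ explicitly. Also, the separation condition only gives you $|\lambda_i-\tilde\lambda_i|\le\|r_i\|_2$, not uniqueness of the eigenvalue in the interval; that is all you use, so this is harmless.

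The obstacle cannot be overcome, because the inequality with exactly $(\textsc{tol}/\zeta_i)^2$ is false. Here is a counterexample.
Take $A=\mathrm{diag}(0,1,2)$, $\ell=2$, $k=1$, and $w_1=\cos\theta\,x_1+\sin\theta\,x_2$ with $0<\theta<\pi/4$ (the $x_j$ are the canonical eigenvectors). Its Ritz value is $\tilde\lambda_1=w_1^HAw_1=\sin^2\theta$.
Then:
\begin{itemize}
\item $\|r_1\|_2=\sin\theta\cos\theta$;
\item the interval $[\tilde\lambda_1-\|r_1\|_2,\tilde\lambda_1+\|r_1\|_2]$ contains $\lambda_1=0$ but not $\lambda_2=1$;
\item $\zeta_1=1$, $(Q_3)_{:,1}=0$, and $\|(Q_\ell)_{:\neq 1,1}\|_2^2=\sin^2\theta$.
\end{itemize}
Every $\textsc{tol}\in(\sin\theta\cos\theta,\sin\theta)$ satisfies the hypotheses, yet $\sin^2\theta>(\textsc{tol}/\zeta_1)^2$.

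In this example $\delta_1=\min_{j\neq 1}|\lambda_j-\tilde\lambda_1|=\cos^2\theta<\zeta_1$. So your first route, the Davis--Kahan bound with $\delta_i$ followed by $\delta_i\ge\zeta_i$, fails as well. The estimate is true with $\delta_i$ in place of $\zeta_i$, and that is all the paper's chain of inequalities actually justifies. Your second route, which states the result up to a correction, is therefore the right conclusion, not a fallback.

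If you want the sharp correction, push your Rayleigh-quotient observation one step further. Set $p_j=|\xi_{ji}|^2$, $s^2=\sum_{j\neq i}p_j$ and $d_j=\lambda_j-\lambda_i$. The identity $\tilde\lambda_i=\sum_j\lambda_jp_j$ turns the residual into a variance:
\[
\|r_i\|_2^2=s^2M-s^4\mu^2,\qquad M=s^{-2}\sum_{j\neq i}d_j^2p_j\ge\zeta_i^2,\qquad \mu=s^{-2}\sum_{j\neq i}d_jp_j .
\]
Since $\mu^2\le M$, this gives $\|r_i\|_2^2\ge\zeta_i^2s^2(1-s^2)$, that is,
\[
\|(Q_\ell)_{:\neq i,i}\|_2^2+\|(Q_3)_{:,i}\|_2^2\ \le\ \frac{1}{|(Q_\ell)_{i,i}|^2}\Bigl(\frac{\textsc{tol}}{\zeta_i}\Bigr)^2 .
\]
With $\|r_1\|_2$ in place of $\textsc{tol}$, this is an equality in the example above. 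Your preliminary bound forces $|(Q_\ell)_{i,i}|^2\ge 1-\textsc{tol}^2/(\zeta_i-\textsc{tol})^2$. Together, the proposition holds up to a factor $1+\mathcal{O}((\textsc{tol}/\zeta_i)^2)$. This correction is second order, better than the first-order one in your route 2.
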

\begin{proof}
  Let us write $A$ in terms of its spectral decomposition $A=X\Lambda X^H$ and use the decomposition of $w_i = X_\ell (Q_\ell)_{:,i} + X_3 (Q_3)_{:,i}$, then the numerator of equation \eqref{eq:res} becomes
  \begin{eqnarray*}
  & \|Aw_i - \tilde{\lambda}_i w_i\|_2 =  \left[ \sum_{\substack{j=1\\j\neq i}}^\ell (\lambda_j - \tilde{\lambda}_i)^2 |Q_\ell|^2_{j,i} + (\lambda_i - \tilde{\lambda}_i)^2 |Q_\ell|^2_{i,i} + \sum_{j=1}^{n-\ell} (\lambda_j - \tilde{\lambda}_i)^2 |Q_3|^2_{j,i} \right]^{\frac{1}{2}} \\
  & \geq  \left[ \sum_{\substack{j=1\\j\neq i}}^\ell (\lambda_j - \tilde{\lambda}_i)^2 |Q_\ell|^2_{j,i} + \sum_{j=1}^{n-\ell} (\lambda_j - \tilde{\lambda}_i)^2 |Q_3|^2_{j,i} \right]^{\frac{1}{2}}
  \geq\ \zeta_i \left[ \sum_{\substack{j=1\\j\neq i}}^\ell |Q_\ell|^2_{j,i} + \sum_{j=1}^{n-\ell} |Q_3|^2_{j,i} \right]^{\frac{1}{2}}.
  \end{eqnarray*}
 Assuming that $w_i$ is normalized to Unity, the last inequality directly translates into
 \[
 \left(\frac{\textsc{tol}}{\zeta_i}\right)^2 \geq  \|(Q_\ell)^2_{:\neq i,i}\|_2^2 +  \|(Q_3)^2_{:,i}\|^2_2
 \]
 proving the statement of the proposition.
\end{proof}

For all practical purpose, $|\lambda_{\ell+j} - \tilde{\lambda}_i | \gg \zeta_i $ when $k < \ell$, which implies that $\left(\frac{\textsc{tol}}{\zeta_i}\right)^2 \gg \|(Q_3)^2_{:,i}\|^2_2$. It is then reasonable to assume that $W \approx X_\ell Q_\ell$. 
Let's separate the subspace of size $\mathcal{X}_\ell = \mathcal{R}(X_\ell)$ in two parts spanned by the first $k$ eigenvectors $X_1$, corresponding to the number of locked vectors, and the remaining part spanned by $\ell-k$ eigenvectors $X_2$. Then, $Q_\ell = (Q_1^\top \ Q_2^\top)^\top$ and similarly $S_\ell = (S_1^\top \ S_2^\top)^\top$ with $S_1 \in \C^{k\times (\ell -k)}\ \textrm{and}\ S_2 \in \C^{(\ell-k)\times (\ell-k)}$. The decomposition of $W$ and $V$ are simply re-written as
\[
W = X_1 Q_1 + X_2 Q_2 \quad \textrm{and} \quad V = X_1 S_1 + X_2 S_2 + X_3 S_3.
\]

\begin{remark}
\label{rm:QS}
In the ChASE Algorithm \ref{alg:ChASE}, as soon as a number of vectors $W$ are deflated and locked, they are also made orthogonal by construction to the new array of vector $V$ that is the new output of the Chebyshev filter and the next subspace iteration. This implies that $\|W^H V\|_2 = \|Q_1^H S_1 + Q_2^H S_2\|_2 \leq \odg{\epsilon^\bot}$ with $\epsilon^\bot$ the orthogonality accuracy achieved during the Rayleigh-Ritz step. By construction, both $Q_1$ and $S_2$ are supposed to be full rank which implies that $\|Q_2\|_2$, $\|S_1\|_2 \sim \odg{\epsilon^\bot}$. This is an operational result and it depends on the orthogonality achieved by the direct/dense eigensolver used in the Rayleigh-Ritz step.
\end{remark}
Assuming the validity of the operational remark above, we can finally write the collection of vectors in $Z$ in the following fashion
\begin{equation}
\label{eq:ZMat}
    Z = [W\ \ \pl (A) V] = \left[ X_1 Q_1\ \ \left(X_2 + X_3 \Gamma_3^m S_3 \inv{S_2} \inv{\Gamma_2^m}\right)\Gamma_2^m S_2 \right].
\end{equation}

\section{Bounding the condition number of an array of filtered vectors}
\label{sec-3}
In this section, we build on the spectral decomposition introduced in Sec.~\ref{sec-2} to derive a sequence of theorems and propositions that provide upper bounds for the condition number of $\pl(A)V$. We begin by examining the simpler setting in which the same polynomial degree $m$ is applied uniformly to all vectors in $V$. We then extend the analysis to the more general case where the polynomial degree is optimized individually for each filtered vector. Finally, we present a general expression for estimating the condition number in the practical scenario where some vectors have already converged and are therefore deflated and locked.

%\textit{
%\begin{itemize}
%    \item Subsection1: lemmas and main proposition for the case of filtering with a single polynomial degree for all vectors
%    \item subsection2: generalization of the results of previous subsection to the case of a vector of optimized degrees
%    \item subsection3: lemmas and propositions for the case that includes locking and deflation
%\end{itemize}
%}

\subsection{Single polynomial degree and no locking mechanism}

\begin{lemma}
\label{th:2-1}
Given the definition of the matrix elements in the spectral decomposition of equation \eqref{eq-2-2-2} the following expressions hold:
\begin{enumerate}
\setlength\itemsep{3pt}
    \item $\cd(X_\ell \Gamma_\ell^m) = \cd(\Gamma_\ell^m) = \frac{\rto 1{m}}{\rto \ell{m}}$ 
    \item $\cd(S_\ell) = 1$
    \item $\|\Gamma_\ell^m S_\ell\|_2 \leq 1$
    \item $\|S_\ell^+ \inv{\Gamma_\ell^m}\|_2 \leq \frac{\rto 1{m}}{\rto \ell{m}}$
\end{enumerate}
\end{lemma}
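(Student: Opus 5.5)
The plan is to verify the four items one at a time, using only the explicit diagonal form \eqref{eq:gammas} of $\Gamma_\ell^m$, the orthonormality of the eigenvector matrix $X=(X_\ell\ X_3)$, and the properties of $V$ stated in the footnote to Sec.~\ref{sec-2-4} (orthonormal columns with a large overlap with the desired subspace).

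\textbf{Item 1.} Since $X_\ell$ has orthonormal columns, $\|X_\ell M y\|_2=\|My\|_2$ for every $y$. Hence $X_\ell\Gamma_\ell^m$ has the same singular values as $\Gamma_\ell^m$, and so the same condition number. By \eqref{eq:gammas}, $\Gamma_\ell^m$ is diagonal with positive entries $\rto a{m}/\rto 1{m}$, which are therefore its singular values. The two extreme entries are $1$ (for $a=1$) and $\rto \ell{m}/\rto 1{m}$ (for $a=\ell$). We use the ordering of convergence ratios implied by Definition~\ref{def:conv_ratio}: the eigenvalue farthest from $[\alpha,\beta]$ has the largest $|\rho|$, so $|\rho_1|\ge|\rho_a|$. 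This gives $\cd(\Gamma_\ell^m)=\rto 1{m}/\rto \ell{m}$.

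\textbf{Item 2.} $S_\ell=X_\ell^HV$ is the compression of the orthonormal $V$ onto $\mathcal{R}(X_\ell)$. In the regime of interest, $V$ already spans (to working accuracy) the subspace $\mathcal{R}(X_\ell)$, so $\|S_3\|_2=\|X_3^HV\|_2\approx 0$. Then $S_\ell^HS_\ell=V^HV-S_3^HS_3=I-S_3^HS_3\approx I$, which makes $S_\ell$ (square and invertible) unitary, and thus $\cd(S_\ell)=1$. This is the step I expect to be the main obstacle. Strictly speaking, $S_\ell^HS_\ell=I-S_3^HS_3$ only gives $\cd(S_\ell)=\bigl(\sigma_{\max}/\sigma_{\min}\bigr)\le 1/\sqrt{1-\|S_3\|_2^2}$. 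So I would state the item as holding up to $\odg{\|S_3\|_2^2}$ under the stated assumption on $V$, and make that assumption explicit rather than treat it as exact.

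\textbf{Items 3 and 4.} These then follow by submultiplicativity. For item 3, $\|\Gamma_\ell^mS_\ell\|_2\le\|\Gamma_\ell^m\|_2\|S_\ell\|_2$. Here $\|\Gamma_\ell^m\|_2=\max_a \rto a{m}/\rto 1{m}=1$ by the ordering above, and $\|S_\ell\|_2\le\|X_\ell^H\|_2\|V\|_2=1$ exactly, with no approximation. For item 4, $S_\ell^+$ coincides with $\inv{S_\ell}$ (or with $S_\ell^H$ when $S_\ell$ is unitary), so $\|S_\ell^+\|_2=1/\sigma_{\min}(S_\ell)=1$ by item 2. Also $\|\inv{\Gamma_\ell^m}\|_2=\max_a\rto 1{m}/\rto a{m}=\rto 1{m}/\rto \ell{m}$. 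Multiplying the two norms gives the bound. Only item 4 inherits the approximation from item 2.
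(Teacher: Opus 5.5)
Items 1, 3 and 4 follow the paper's route: the orthonormal factor $X_\ell$ leaves singular values unchanged, the diagonal of $\Gamma_\ell^m$ in \eqref{eq:gammas} is monotone, and the 2-norm is submultiplicative. For the smallest entry you also need $\rto \ell{m}\le\rto a{m}$ for $a\le\ell$. This follows from the same monotonicity of $|\rho|$ in the distance from $[\alpha,\beta]$, together with the ascending ordering of the eigenvalues.

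The real difference is item 2. The paper asserts $\|X_\ell^HV\|_2=\|V\|_2=1$ and says ``the same conclusion'' holds for $\|S_\ell^{-1}\|_2$. Neither step gives exact equality: the compression only satisfies $\|X_\ell^HV\|_2\le 1$, and $\|S_\ell^{-1}\|_2=1/\sigma_{\min}(S_\ell)\ge 1$.

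Your identity $S_\ell^HS_\ell=I-S_3^HS_3$, which comes from $V^HV=I$, shows exactly what is true:
\begin{itemize}
\item $\|S_\ell\|_2\le 1$ holds exactly, so item 3 needs no approximation. The paper instead routes item 3 through item 2.
\item $\|S_\ell^{-1}\|_2=(1-\|S_3\|_2^2)^{-1/2}$ and $\cd(S_\ell)\le(1-\|S_3\|_2^2)^{-1/2}$. So items 2 and 4 carry a factor that equals $1$ only when $S_3=0$, i.e.\ $\mathcal{R}(V)=\mathcal{R}(X_\ell)$.
\end{itemize}

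Your version is therefore correct and quantified, and it makes explicit the hypothesis $\|S_3\|_2\approx 0$ that the paper uses tacitly. The paper relies on the same hypothesis again in Proposition~\ref{th:2-2}, where it silently replaces $S_\ell^{-1}$ by $S_\ell^H=V^HX_\ell$ when bounding $\|D\|_2$.

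The price is that the lemma becomes approximate. You should also note that the hypothesis fails for the random starting block: there $\|S_3\|_2$ is close to $1$ when $n\gg\ell$. This contradicts the paper's claim that item 2 also covers the i.i.d.\ case at iteration $0$.
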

\begin{proof}
The condition number of a general rectangular matrix $M$ is defined through the pseudo-inverse $M^+$ as $\textrm{cond}_p(M) \doteq \|M\|_p \|M^+\|_p $. Specifically, when using the 2-norm ($p=2$), $\cd(M) = \frac{\sigma_{\textrm{max}}(M)}{\sigma_{\textrm{min}}(M)}$ with $\sigma_{\textrm{max}}(M)$ and $\sigma_{\textrm{min}}(M)$ being the largest and smallest non-zero singular values of $M$. 

With the above definition in hand, each statement of the Lemma can be derived in few passages.
\begin{enumerate}
\setlength\itemsep{3pt}
    \item Since $X_\ell$ is a unitary matrix by definition, $\|X_\ell \Gamma_\ell^m\|_2 = \|\Gamma_\ell^m\|_2$ and 
    \[
    \cd(X_\ell \Gamma_\ell^m) = \cd(\Gamma_\ell^m) = \frac{\sigma_1(\Gamma_\ell^m)}{\sigma_\ell(\Gamma_\ell^m)} = \frac{\rto 1{m}}{\rto \ell{m}}.
    \]  
    \item Using the definition of $S_\ell$, $\cd(S_\ell) = \|X_\ell^H V\|_2 \|(X_\ell^H V)^{-1}\|$. Because $V$ is by construction either \texttt{i.i.d.} (at subspace iteration 0) or orthonormal (at later subspace iterations), $\|X_\ell V\|_2 = \|V\|_2 = 1$. the same conclusion hold for the norm of the inverse of $S_\ell$.
    \item The third statement follows from a simple chain of relations
    \[
    \|\Gamma_\ell^m S_\ell\|_2 \leq \|\Gamma_\ell^m\|_2 \|S_\ell\|_2 = \|\Gamma_\ell^m\|_2 = \sigma_{\textrm{max}}(\Gamma_\ell^m) = 1,
    \]
    where we have used the Schwarz inequality, the intermediate results from statement 2., and the definition of $\Gamma$s in equation \eqref{eq:gammas} whose values are in descending order from left to right.
    \item Similarly to the previous statement, also the last one is deduced by a simple chain of inequalities 
    \[
    \|S_\ell^+ \inv{\Gamma_\ell^m}\|_2 \leq \|S_\ell^+\|_2 \|\inv{\Gamma_\ell^m}\|_2 = \|\inv{\Gamma_\ell^m}\|_2 = \frac{\rto 1{m}}{\rto \ell{m}}.
    \]
\end{enumerate}
\end{proof}
While simple, the results of the Lemma above are particularly relevant in the case $p=2$ norm thanks to the following inequality which holds for two rectangular matrices $A$ and $B$ with congruent dimensions
\begin{equation}
\label{eq-2-2-3}
\cd(AB) \leq \cd(A) \cd(B).
\end{equation}
For instance, from the 3. and 4. statements and the definition of condition number, it automatically holds that $\cd(\Gamma_\ell^m S_\ell) \leq \frac{\rto 1{m}}{\rto \ell{m}}$. Moreover, we can now state and demonstrate the proposition below.

\begin{proposition}
\label{th:2-2}
Given a square Hermitian matrix $A$, we filter an array of orthonormal vectors $V$ by a Chebyshev polynomial of degree $m$ defined by $\pl (A)$. For a given $m$, the subspace filtered by the polynomial can be built such that $1 < \rto \ell{m} < 1 + \sqrt{2}$, then the following inequality holds for a constant $\eta > 1$ 
\begin{equation}
\cd \left(\pl (A)V\right) \leq \eta \rto 1{m}
\end{equation}
\end{proposition}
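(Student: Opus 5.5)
We start from the factorization \eqref{eq-2-2-2}, writing $\pl(A)V = M\,\Gamma_\ell^m S_\ell$ with $M \doteq X_\ell + X_3 \Gamma_3^m S_3 \inv{S_\ell}\inv{\Gamma_\ell^m}$. Inequality \eqref{eq-2-2-3} then gives
\[
\cd\left(\pl(A)V\right) \leq \cd(M)\,\cd(\Gamma_\ell^m S_\ell) \leq \cd(M)\,\frac{\rto 1{m}}{\rto \ell{m}} ,
\]
where the last step uses statements 3 and 4 of Lemma~\ref{th:2-1}. The whole proof therefore reduces to showing $\cd(M) \leq \eta\,\rto \ell{m}$ for a constant $\eta>1$. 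Equivalently, the factor $\rto \ell{m}$ lost in the denominator must be recovered through the conditioning of the perturbed basis $M$.

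To bound $\cd(M)$ we use that $X_\ell$ and $X_3$ have mutually orthogonal columns. This gives $M^H M = I + E^H E$ with $E \doteq \Gamma_3^m S_3 \inv{S_\ell}\inv{\Gamma_\ell^m}$, so that
\[
\sigma_{\min}(M)\geq 1 \quad\text{and}\quad \sigma_{\max}(M) = \sqrt{1+\|E\|_2^2}.
\]
Hence $\cd(M)\leq \sqrt{1+\|E\|_2^2}\leq 1+\|E\|_2$, and no lower-bound work on $M$ is needed. For $\|E\|_2$ we use \eqref{eq:gammas}. Since $\Gamma_3^m = \rto 1{-m} I$ and $\|\inv{\Gamma_\ell^m}\|_2 = \rto 1{m}/\rto \ell{m}$, we obtain
\[
\|E\|_2 \leq \rto 1{-m}\,\|S_3\|_2\,\|\inv{S_\ell}\|_2\,\frac{\rto 1{m}}{\rto \ell{m}} = \frac{\|S_3\|_2\|\inv{S_\ell}\|_2}{\rto \ell{m}}.
\]
By statement 2 of Lemma~\ref{th:2-1} (together with $\|S_3\|_2\leq\|V\|_2=1$), this is at most $\rto \ell{m}^{-1}$ up to a modest constant. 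Thus $\cd(M)\leq 1+c/\rto \ell{m}$, and
\[
\cd(\pl(A)V)\leq \rto 1{m}\left(\frac{1}{\rto \ell{m}}+\frac{c}{\rto \ell{m}^2}\right).
\]

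The last step uses the hypothesis $1<\rto \ell{m}<1+\sqrt 2$ to turn this into the claimed form. On that interval the bracket is bounded by a constant: it is at most $1+c$, attained as $\rto \ell{m}\to 1$. Setting $\eta \doteq 1+c$, or $\eta$ defined as the supremum of $x^{-1}+c\,x^{-2}$ over the interval, gives $\cd(\pl(A)V)\leq \eta\rto 1{m}$ with $\eta>1$. The upper limit $1+\sqrt2$ is the value at which $\cosh^{-1}$ of the shifted argument equals $\ln(1+\sqrt2)$, i.e.\ $t=\sqrt2$. We would remark that the hypothesis is what keeps $\eta$ independent of $m$ and prevents $\eta$ from collapsing below one.

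The main obstacle is controlling $\|S_3\|_2\|\inv{S_\ell}\|_2$. Statement 2 of Lemma~\ref{th:2-1} asserts $\cd(S_\ell)=1$, but in general $\inv{S_\ell}$ can be large when $V$ overlaps $\mathcal{R}(X_\ell)$ poorly. We would absorb this factor into $\eta$, noting that the paper assumes $S_\ell$ is non-pathological. Under the unitary reading of the Lemma the constant becomes simply $\eta\leq 2$. A secondary point is that \eqref{eq:gammas} is itself an asymptotic simplification, so the proof is rigorous only relative to those simplified expressions of the $\Gamma$ matrices.
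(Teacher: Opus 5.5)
Your proof is correct relative to the same simplified $\Gamma$'s and Lemma~\ref{th:2-1} that the paper uses, but it takes a different route at the key step.

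The paper also starts from \eqref{eq-2-2-2}. It then writes the perturbed basis as $(\Ibb-D)X_\ell$, with the $n\times n$ matrix $D=-X_3E^mX_\ell^H$. It bounds $\cd(\Ibb-D)\le(1+\|D\|_2)/(1-\|D\|_2)$ using the triangle inequality and the Neumann-series lemma of Golub--Van Loan, which requires $\|D\|_2<1$. With $\|D\|_2\le|\rho_\ell|^{-m}$ this gives $\eta=\frac{x+1}{x(x-1)}$, where $x=|\rho_\ell|^m$.

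You instead use $X_\ell^HX_3=0$ to obtain $M^HM=I+E^HE$. This gives $\sigma_{\min}(M)\ge 1$ for free and $\cd(M)\le\sqrt{1+\|E\|_2^2}$. Since $\|D\|_2=\|E\|_2$, your factor is sharper for every $x>1$, because $\sqrt{1+x^{-2}}\le 1+x^{-1}<\frac{x+1}{x-1}$. It also needs no smallness condition and gives an $\eta$ that is uniform in $m$. The paper's $\eta$ depends on $m$ through $x$ and blows up as $x\to 1^+$. In short, the paper uses a generic perturbation bound that ignores the orthogonal splitting; yours exploits it and is both simpler and stronger.

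Two points in your commentary need correcting.

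First, the threshold $1+\sqrt2$ has nothing to do with $t=\sqrt2$; note also that the hypothesis constrains $|\rho_\ell|^m$, not $|\rho_\ell|$. It is the positive root of $x^2-2x-1$, i.e.\ exactly where the paper's $\eta(x)=\frac{x+1}{x(x-1)}$ falls to $1$, and that is its only role. Your proof never uses the upper bound, since $\eta=1+c$ works for every $x\ge 1$. Nor does the hypothesis make anything independent of $m$: the paper's $\eta(x)$ is unbounded on $(1,1+\sqrt2)$.

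Second, your ``main obstacle'' can be settled exactly. From $V^HV=I$ you get $S_\ell^HS_\ell+S_3^HS_3=I$. Hence $\|S_3\|_2=\sin\theta$ and $\|S_\ell^{-1}\|_2=\sec\theta$, where $\theta$ is the largest principal angle between $\mathcal{R}(V)$ and $\mathcal{R}(X_\ell)$, so $c=\tan\theta$. This also shows that statement 2 of Lemma~\ref{th:2-1}, read as $\|S_\ell^{-1}\|_2=1$, forces $S_3=0$, so your suspicion is justified.

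But if you distrust statement 2, you should not invoke statement 4 in your first display either. Instead carry $\cd(\Gamma_\ell^m S_\ell)\le\sec\theta\,|\rho_1|^m/|\rho_\ell|^m$. With that bookkeeping your argument gives the clean bound $\cd(\pl(A)V)\le\sec^2\theta\,|\rho_1|^m/|\rho_\ell|^m$, valid for all $|\rho_\ell|^m\ge 1$.
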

\begin{proof}
For the sake of simplicity, let us define $E^m \doteq \Gamma_3^m S_3 \inv{S_\ell} \inv{\Gamma_\ell^m}$ and write again Equation~\eqref{eq-2-2-2} as
\[
\pl (A) V = (X_\ell + X_3 E^m) \Gamma_\ell^m S_\ell
\]
In the large $m$ limit we expect $X_\ell + X_3 E^m \longrightarrow X_\ell$. For some finite, but large enough values of $m$, one can treat the expression $X_3E^m$ as a perturbation of $X_\ell$ and write $X_3 E^m = - \delta X_\ell$, such that 
\[
\pl (A) V = (\Ibb - \delta X_\ell X_\ell^H) X_\ell \Gamma_\ell^m S_\ell = (\Ibb - D) X_\ell \Gamma_\ell^m S_\ell.
\]
From Equation \eqref{eq-2-2-3} follows that 
\[
\cd \left(\pl (A)V\right) \leq \cd (\Ibb - D) \cd (X_\ell \Gamma_\ell^m S_\ell) \leq \cd (\Ibb - D) \frac{\rto 1{m}}{\rto \ell{m}}.
\]
Following the definition of condition number $\cd (\Ibb - D) = \|(\Ibb - D)\|_2 \|\left(\Ibb - D\right)^{-1}\|_2$, we need to obtain bounds for the 2-norms on the RHS. From the triangle inequality $\|\Ibb - D \|_p \leq 1 + \|D\|_p$, while from Lemma 2.3.3 of Golub Van-Loan (3rd edition)~\cite{Golub:2012wt} one has that $\|\left(\Ibb - D\right)^{-1}\|_p \leq \left( \Ibb - \|D\|_p\right)^{-1}$ if $\|D\|_p < 1$ . All is left to do is to derive a bound for the norm $D$. It useful to express the latter in explicit form
\[
- D = X_3 \Gamma_3^m S_3 S_\ell^{-1} \inv{\Gamma_\ell^m} X_\ell^H = \left(X_3 \Gamma_3^m X_3^H\right) V V^H \inv{X_\ell \Gamma_\ell^m X_\ell^H}, 
\]
where in the last equality we have used the definition of the $S$ matrices. Taking the norm of $D$ and using successively the triangle inequality and the results from Lemma \ref{th:2-1}, we obtain the following inequality
\[
\|D\|_2 \leq \|X_3 \Gamma_3^m X_3^H\|_2 \|V V^H\|_2 \|\inv{X_\ell \Gamma_\ell^m X_\ell^H}\|_2 \leq \|\Gamma_3^m\|_2 \|\inv{\Gamma_\ell^m}\|_2 = \rto \ell{-m}.
\]
By assumption, $\rto \ell{m}$ is a always a number larger than 1, then we can plug in this last result into the definition of $\cd (\Ibb - D)$ and obtain
\begin{equation}
    \cd (\pl (A) V) \leq \frac{\rto 1{m}}{\rto \ell{m}} \cdot \frac{\rto \ell{m} + 1}{\rto \ell{m} -1}.
\end{equation}
Under the condition of the proposition $\eta \doteq \frac{\rto \ell{m} + 1}{\rto \ell{m} (\rto \ell{m} -1)} > 1$ which completes the proof.
\end{proof}

\begin{remark}
\label{obs:2-1}
In practical cases, $\rto \ell{m}$ is always a number larger than unity but almost never larger than 2. This is because the choice of subspace is such that the eigenvalue $\lambda_\ell$ is quite close to the extreme of the filtered interval of the spectrum but always included in it. The latter condition guarantee that $\rto \ell{m} > 1$ while its closeness to extreme ensure that it is never too large and always much smaller than $\rto 1{m}$. For this reason, the parameter $\eta$ is bigger than one but not by too much. We will see in the set of numerical experiments that it is fairly safe to choose the $\eta = 1$ and use a simplified version of the bound.
\end{remark}

\subsection{Optimized polynomial degree}
As we have seen at the end of Section \ref{sec-2-3}, when the polynomial degree is optimized $m$ becomes a vector of size $\ell$ holding increasingly larger integer values as the vector index grows. In this case Lemma \ref{th:2-1} can be easily generalized by adding a simple condition.
\begin{lemma}
\label{th:2-3}
Given the definition of the matrix elements in the spectral decomposition of equation \eqref{eq-2-2-2} with the $\Gamma$'s expressed by Equation \ref{eq:gammas-2}, and the additional condition that $\rto i{m_i} \rto 1{(m_{i+1} - m_i)} \geq \rto{i+1}{m_{i+1}}$  the following expressions hold:
\begin{enumerate}
\setlength\itemsep{3pt}
    \item $\cd(X_\ell \Gamma_\ell^m) = \cd(\Gamma_\ell^m) = \frac{\rto 1{m_\ell}}{\rto \ell{m_\ell}}$ 
    \item $\cd(S_\ell) = 1$
    \item $\|\Gamma_\ell^m S_\ell\|_2 \leq 1$
    \item $\|S_\ell^+ \inv{\Gamma_\ell^m}\|_2 \leq \frac{\rto 1{m_\ell}}{\rto \ell{m_\ell}}$
\end{enumerate}
\end{lemma}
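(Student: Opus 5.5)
The plan is to reuse the proof of Lemma~\ref{th:2-1} verbatim for statements 2, 3 and 4, and to put all the new work into statement 1, which is the only place where the degree vector $m=(m_1,\dots,m_\ell)$ enters nontrivially.

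For statement 1, $X_\ell$ has orthonormal columns, so $\cd(X_\ell\Gamma_\ell^m)=\cd(\Gamma_\ell^m)$ exactly as before. Since $\Gamma_\ell^m$ is diagonal with positive entries $\gamma_i=\rto i{m_i}/\rto 1{m_i}$ taken from Equation~\eqref{eq:gammas-2}, its condition number is the ratio of its largest to its smallest diagonal entry.

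\begin{enumerate}
\item I will show that the sequence $\gamma_i$ is non-increasing in $i$. The ratio of consecutive entries is
\[
\frac{\gamma_{i+1}}{\gamma_i}=\frac{\rto{i+1}{m_{i+1}}}{\rto i{m_i}\,\rto 1{(m_{i+1}-m_i)}}.
\]
\item The additional hypothesis $\rto i{m_i}\rto 1{(m_{i+1}-m_i)}\geq\rto{i+1}{m_{i+1}}$ is precisely the statement that this ratio is at most $1$. This is why the condition is added.
\item With monotonicity in hand, $\gamma_1=1$ is the maximum. The minimum is $\gamma_\ell=\rto \ell{m_\ell}/\rto 1{m_\ell}$, which gives $\cd(\Gamma_\ell^m)=\rto 1{m_\ell}/\rto \ell{m_\ell}$.
\item Positivity, and hence invertibility, of all the $\gamma_i$ follows from $|\rho_a|>1$ for all $a$.
\end{enumerate}

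Statement 2 depends only on $S_\ell=X_\ell^H V$ and on $V$ being orthonormal. It does not involve $m$, so I will quote the argument of Lemma~\ref{th:2-1} unchanged.

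For statement 3, I use the Schwarz inequality $\|\Gamma_\ell^m S_\ell\|_2\leq\|\Gamma_\ell^m\|_2\|S_\ell\|_2$ together with statement 2. Monotonicity from step 1 gives $\|\Gamma_\ell^m\|_2=\gamma_1=1$.

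For statement 4, the same chain gives $\|S_\ell^+\inv{\Gamma_\ell^m}\|_2\leq\|S_\ell^+\|_2\|\inv{\Gamma_\ell^m}\|_2$, with $\|S_\ell^+\|_2=1$ by statement 2. Then $\|\inv{\Gamma_\ell^m}\|_2=1/\min_i\gamma_i=\rto 1{m_\ell}/\rto \ell{m_\ell}$, using step 1 again.

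The main obstacle is step 1, establishing that the diagonal of $\Gamma_\ell^m$ is ordered. Without the extra condition, the mixed exponents could make some intermediate $\gamma_i$ smaller than $\gamma_\ell$ or larger than $\gamma_1$. The bounds would then involve a $\max/\min$ over all $i$ rather than the clean end-point expressions. So the proof hinges on writing the consecutive ratio correctly and matching it to the hypothesis. I will also make sure the inequality direction is consistent with the convention $|\rho_1|\geq|\rho_i|$ (the $|\rho_a|$ ordering), so that $\gamma_1=1$ really is the largest entry.
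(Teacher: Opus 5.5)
Your proposal is correct relative to the paper and follows its route: statements 2 and 3 are carried over from Lemma~\ref{th:2-1}, and statements 1 and 4 rest on the diagonal of $\Gamma_\ell^m$ staying in descending order. The paper merely asserts that ordering under the extra hypothesis, whereas you verify it by showing the hypothesis is exactly $\gamma_{i+1}/\gamma_i\le 1$. Be aware that, exactly as in the paper, statement 2 and the norms $\|S_\ell\|_2=\|S_\ell^+\|_2=1$ are inherited rather than proved, and orthonormality of $V$ alone does not give them: $V^HV=I$ yields $S_\ell^HS_\ell=I-S_3^HS_3$, so these inherited claims hold exactly only when $S_3=0$ (statement 3 needs only $\|S_\ell\|_2\le 1$, which is fine).
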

\begin{proof}
The proof for statements 2. and 3. is exactly the same as for \cref{th:2-1}. The only difference for the remaining statements is in the structure of the $\Gamma$s. As long as the additional condition of the Lemma holds, the values of the diagonal elements of the $\Gamma$s are still written in descending ordered so that the same steps in the proof leads to the stated results. 
\end{proof}

A similar result is obtained for \cref{th:2-2} where now $ m \rightarrow m_\ell$. The end results is that, under the condition that $1 < \rto \ell{m_\ell} < 1 + \sqrt{2}$,
\begin{equation}
    \cd \left(\pl (A)V\right) \leq \eta \rto 1{m_\ell}
\end{equation}
with $\eta \doteq \frac{\rto \ell{m_\ell} + 1}{\rto \ell{m_\ell} (\rto \ell{m_\ell} -1)} > 1$. The proof of the statement follows exactly the same steps described in the proof of \cref{th:2-2} are not worth repeating here. The clue point is to recognize that the norm of the matrix $\|D\| \leq \rto \ell{m_\ell}$. Similarly to Remark \ref{obs:2-1}, as long as the value of $m_\ell$ is $\odg{10}$, $\rto \ell{m_\ell}$ is a number larger than 1 but still of $\odg{1}$ and for all practical purposes it can be ignored.

\subsection{Locking and deflating the vectors}
\label{sec-3-3}
In this case bounding from above the condition number of $V$ is a bit more complex and requires understanding how the condition number of the spectral decomposition illustrated in Equation \eqref{eq:ZMat} is influenced by its components. Before attempting a rigorous analysis it is worth making a simple hand waving consideration. The first $k$ columns of the array $Z$ are made by a very well behaved sub-array, $X_1Q_1$, whose condition number is by construction likely to be as close to unity as it gets. In fact, this array represents the portion of $Z$ that has converged and has been deflated. Conversely the sub-array $\left(X_2 \Gamma_2^m S_2 + X_3 \Gamma_3^m S_3 \right)$ encloses the part of $Z$ that may be problematic. As such, one would expect only this part to contribute to the condition number of $Z$. In order to show this more rigorously, we state and demonstrate the following proposition.

\begin{theorem}
\label{th:deflation}
Given two rectangular matrices $A$ and $B$ with $A \in \C^{n\times q}$ and $B \in \C^{n\times p}$, $n>q+p$ and $A^HB=0$ (or equivalently $B^HA=0$), the condition number of $M \doteq \left[A\ B\right] \in \C^{n\times\ell}$ is
\begin{equation}
    \cd (M) \leq \frac{\sigma_1(A) + \sigma_1(B)}{\inf\{\sigma_q(A), \sigma_p(B)\}}
\end{equation}
\end{theorem}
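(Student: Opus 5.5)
The plan is to exploit the orthogonality $A^HB=0$ through the Gram matrix. Since $A^HB=0$, we have
\[
M^HM = \begin{pmatrix} A^HA & 0 \\ 0 & B^HB \end{pmatrix},
\]
which is block diagonal. The squared singular values of $M$ are the eigenvalues of $M^HM$, so they are exactly the union of the eigenvalues of $A^HA$ and of $B^HB$. Hence the multiset of singular values of $M$ is the union of those of $A$ and of $B$. We will implicitly assume that $A$ and $B$ have full column rank, so that $\sigma_q(A)$ and $\sigma_p(B)$ are nonzero. This also makes $M$ full column rank, with $\ell=q+p<n$, so $\sigma_{\min}(M)$ is its $\ell$-th singular value.

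With this decomposition the two extreme singular values follow directly:
\[
\sigma_{\max}(M)=\max\{\sigma_1(A),\sigma_1(B)\}, \qquad \sigma_{\min}(M)=\min\{\sigma_q(A),\sigma_p(B)\}.
\]
This yields the sharper identity
\[
\cd(M)=\frac{\max\{\sigma_1(A),\sigma_1(B)\}}{\min\{\sigma_q(A),\sigma_p(B)\}}.
\]
The stated bound then follows from the trivial inequality $\max\{x,y\}\le x+y$ for nonnegative $x,y$, with the $\inf$ in the statement read as the minimum over the two numbers.

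As an alternative that avoids the eigenvalue argument, one could bound the two ends separately. For the numerator, the triangle inequality on $Mz = Ax + By$ gives $\|M\|_2\le\|A\|_2+\|B\|_2$, which matches the form of the statement directly. For the denominator, write $z=(x^H\ y^H)^H$ and use the Pythagorean identity $\|Mz\|_2^2=\|Ax\|_2^2+\|By\|_2^2$. Then
\[
\|Mz\|_2^2 \ge \sigma_q(A)^2\|x\|_2^2+\sigma_p(B)^2\|y\|_2^2 \ge \min\{\sigma_q(A),\sigma_p(B)\}^2\,\|z\|_2^2 .
\]
The step needing the most care is this lower bound on $\sigma_{\min}(M)$: the bound uses the orthogonality of the column spaces in an essential way, and it requires the full column rank of $A$ and $B$ so that the pseudo-inverse definition of $\cd$ from the proof of Lemma \ref{th:2-1} coincides with $\sigma_{\max}/\sigma_{\min}$.
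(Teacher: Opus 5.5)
Your proof is correct. Your main line differs from the paper's and is sharper.

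The paper bounds the two ends of the spectrum separately:
\begin{itemize}
\item the top via $MM^H=AA^H+BB^H$ and the triangle inequality, which needs no orthogonality;
\item the bottom by using the block-diagonal Gram matrix only to compute $M^+=(M^HM)^{-1}M^H$, whose blocks are $A^+$ and $B^+$, and then bounding $\|M^+\|_2\le\max\{\|A^+\|_2,\|B^+\|_2\}$.
\end{itemize}
You use the same block-diagonal $M^HM$ to read off the whole singular spectrum of $M$ as the union of those of $A$ and $B$. This gives the exact identity $\kappa_2(M)=\max\{\sigma_1(A),\sigma_1(B)\}/\min\{\sigma_q(A),\sigma_p(B)\}$, and the theorem is this identity loosened by $\max\le$ sum.

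What your route buys:
\begin{itemize}
\item It is shorter and avoids the pseudo-inverse algebra. The paper's computation there has two slips that do not affect its conclusion: a dropped square root ($\lambda_1(MM^H)=\sigma_1(M)^2$ and $\|AA^H\|_2=\sigma_1(A)^2$), and $M^+$ written block-diagonally rather than as $A^+$ stacked over $B^+$.
\item It shows the numerator can be the maximum rather than the sum. This matters for the application that follows. With $A=X_1Q_1$ having orthonormal columns, your identity gives $\kappa_2(Z)=\kappa_2(B)$ exactly whenever $\sigma_{\ell-k}(B)\le 1\le\sigma_1(B)$, where the paper only reaches an approximate equality.
\end{itemize}

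Your Pythagorean alternative is essentially the paper's argument in direct form. Its merit is that it isolates the one place where $A^HB=0$ enters, through $\|Mz\|_2^2=\|Ax\|_2^2+\|By\|_2^2+2\,\mathrm{Re}(x^HA^HBy)$. That makes it the natural starting point if one wants to keep the $\mathcal{O}(\epsilon^\bot)$ cross terms that are neglected in passing to \eqref{eq:ZMat}.
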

\begin{proof}
By definition, the condition number of $M$ is
\begin{equation*}
    \cd (M) = \|M\|_2 \|M^+\|_2 = \frac{\sigma_1(M)}{\sigma_\ell(M)}
\end{equation*}
where, as before, $M^+$ indicates the pseudo-inverse of $M$ and $\ell = q+p$. The series of inequalites below hold
\[
\sigma_1(M) = \|M\|_2 = \sqrt{\lambda_1(MM^H)} = \|AA^H + BB^H\|_2 \leq \|AA^H\|_2 + \|BB^H\|_2 = \sigma_1(A) + \sigma_1(B) 
\]
where $\lambda_1$ and $\sigma_1$ are respectively the largest eigenvalue and the largest singular value. The norm of the pseudo-inverse is a bit trickier. Since $M$ has $\ell$ independent columns the definition of pseudo-inverse is $M^+ = (M^HM)^{-1}M^H$. In block forms this becomes
\[
M^+ = \left[ \begin{pmatrix} A^H \\ B^H \end{pmatrix} \begin{pmatrix} A & B \end{pmatrix} \right]^{-1} \begin{pmatrix} A^H \\ B^H \end{pmatrix} = \begin{pmatrix} A^HA & 0 \\ 0 & B^HB \end{pmatrix}^{-1} \begin{pmatrix} A^H \\ B^H \end{pmatrix}
\]
\[
= \begin{pmatrix} (A^HA)^{-1}A^H & 0 \\ 0 & (B^HB)^{-1}B^H \end{pmatrix} 
 =  \begin{pmatrix} A^+ & 0 \\ 0 & B^+ \end{pmatrix}
\]
where we used the proposition assumption in the first step, and the fact that the inverse of block diagonal matrices are equal to the inverse of the blocks in the second step. With this expression we can now easily show that
\[
[\sigma_\ell(M)]^{-1} = \sigma_1(M^+) = \left\|\begin{pmatrix} A^+ & 0 \\ 0 & B^+ \end{pmatrix}\right\|_2 \leq \sup \{\|A^+\|_2, \|B^+\|_2\} = 
\sup \{\sigma_1(A^+), \sigma_1(B^+)\}
\]
\[
 = \sup \{\sigma_q(A)^{-1}, \sigma_p(B)^{-1}\} = \left[\inf \{\sigma_q(A), \sigma_p(B)\} \right]^{-1}
\]
The central inequality can be derived by using the definition of matrix norm $\|M^+\|_2 = \sup_{\|v\|=1}\|M^+ v\|_2$ with a vector $v$ defined as 
\[
v = \begin{pmatrix} a x \\ b y \end{pmatrix}\ \textrm{with}\quad a^2+b^2 = 1\ \textrm{and}\quad \|x\| = \|y\| = 1
\]
such that
\[
\|M^+\|^2_2 = a^2 \sup \|A^+ x\|^2_2 + b^2 \sup \|B^+ y\|^2_2 \leq (a^2+b^2) \sup \{\|A^+\|_2^2, \|B^+\|_2^2\} = \sup \{\|A^+\|_2, \|B^+\|_2\}^2  
\]
\end{proof}
The statement of \cref{th:deflation} can be directly applied to the $Z$ matrix of \eqref{eq:ZMat}, with $M=Z$, $A=X_1Q_1$ and $B = X_2 \Gamma_2^m S_2 + X_3 \Gamma_3^m S_3$ and correspondingly $q=k$ and $p=\ell-k$. Because $A=X_1Q_1$ represents the array of deflated and locked vectors, it is a unitary matrix by construction. As such, $\sigma_k(X_1Q_1) \sim \sigma_1(X_1Q_1) = \|X_1Q_1\|_2 = 1$. Conversely

\begin{eqnarray*}
\sigma^2_{\ell-k}(B) &= \lambda_{\ell-k}\left(S^H_2 (\Gamma_2^m)^H \Gamma_2^m S_2 + S^H_3 (\Gamma_3^m)^H \Gamma_3^m S_3\right) = \inf_{\|v\|_2=1} v^H \left(S^H_2 (\Gamma_2^m)^H \Gamma_2^m S_2 + S^H_3 (\Gamma_3^m)^H \Gamma_3^m S_3\right) v \\
 &= \inf_{\|v\|_2=1} (v^H S^H_2 (\Gamma_2^m)^H \Gamma_2^m S_2 v) + \inf_{\|v\|_2=1} (v^H S^H_3 (\Gamma_3^m)^H \Gamma_3^m S_3 v) \\
 &= \inf_{\|z\|_2=1} (z^H (\Gamma_2^m)^H \Gamma_2^m z) + \inf_{\|u\|_2=1} (u^H S^H_3 (\Gamma_3^m)^H \Gamma_3^m S_3 u) = \sigma^2_{\ell-k}(\Gamma^m_2) + \sigma^2_{n-\ell}(\Gamma^m_3)
\end{eqnarray*}

Since $\Gamma^m_2$ and $\Gamma^m_3$ are positive definite and $\sigma_{\ell-k}(\Gamma_2^m) = \frac{\rto{\ell}{m\ell}}{\rto{1}{m_\ell}} \ll 1$ and $\sigma_{n-\ell}(\Gamma_3^m) = \frac{1}{\rto{1}{m_\ell}} \ll 1$ then
\[
\sigma_{\ell-k}(\Gamma_2^m S_2 + \Gamma_3^m S_3) \leq \sigma_{\ell-k}(\Gamma^m_2) + \sigma_{n-\ell}(\Gamma^m_3) \ll 1.
\]
Similarly, it can be argued that $\sigma_1((\Gamma_2^m S_2 + \Gamma_3^m S_3) \gg 1$. Putting all the pieces together one arrives at
\[
\label{eq:condZ}
\cd (Z) \leq \frac{1 + \sigma_1((\Gamma_2^m S_2 + \Gamma_3^m S_3)}{\sigma_{\ell-k}(\Gamma_2^m S_2 + \Gamma_3^m S_3)} \approxeq \cd (\Gamma_2^m S_2 + \Gamma_3^m S_3)
\]

\begin{corollary}
\label{cr:3-1}
Given the polynomial filter $\pl (A)$ of degree $m$ generated by a square Hermitian matrix $A$, and a subspace $Z$ made by the union of the deflated and locked subspace $W$ and the filtered subspace $\pl (A)V$  the following inequality holds for $1 < \rto \ell{m_\ell} < 1 + \sqrt{2}$ and a constant $\eta > 1$ 
\begin{equation}
\cd (Z) \lesssim \eta \rto {k+1}{m_{k+1}} \rto 1{m_\ell-m_{k+1}}
\end{equation}
\end{corollary}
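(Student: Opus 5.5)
The plan is to combine the reduction just derived after \cref{th:deflation}, namely $\cd(Z) \approxeq \cd(B)$ with $B \doteq X_2 \Gamma_2^m S_2 + X_3 \Gamma_3^m S_3$, with the argument of \cref{th:2-2} in its optimized-degree form (\cref{th:2-3}). The only new ingredient is that the active block $X_2$ now starts at index $k+1$ rather than $1$. The $\Gamma$'s of Equation \eqref{eq:gammas-2}, restricted to the columns $k+1,\dots,\ell$, are therefore no longer normalized to one on their first diagonal entry. I will keep the common normalization by $\rto 1{m}$ used throughout the paper and track how this changes the ratio of extreme singular values.

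The first step is to write $B$ in the factored form of Equation \eqref{eq-2-2-2}:
\[
B = \left(X_2 + X_3 E_2^m\right)\Gamma_2^m S_2, \qquad E_2^m \doteq \Gamma_3^m S_3 \inv{S_2}\inv{\Gamma_2^m},
\]
and then apply Equation \eqref{eq-2-2-3} to obtain $\cd(B) \leq \cd(\Ibb - D_2)\,\cd(\Gamma_2^m)\,\cd(S_2)$.

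For $\cd(S_2)$ I will reuse statement 2 of \cref{th:2-1}. By Remark \ref{rm:QS}, $S_1 = \odg{\epsilon^\bot}$, so $S_2$ is orthonormal up to $\odg{\epsilon^\bot}$ and $\cd(S_2) \approx 1$. This is one of the places where the inequality becomes $\lesssim$.

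For $\Gamma_2^m$ the diagonal entries are $\rto a{m_a}/\rto 1{m_a}$ for $a = k+1,\dots,\ell$. Under the monotonicity condition of \cref{th:2-3}, applied from index $k+1$ onward, they are decreasing. The ratio of the largest to the smallest entry is
\[
\frac{\rto{k+1}{m_{k+1}}\,\rto 1{m_\ell}}{\rto 1{m_{k+1}}\,\rto \ell{m_\ell}} = \frac{\rto{k+1}{m_{k+1}}\,\rto 1{m_\ell - m_{k+1}}}{\rto \ell{m_\ell}}.
\]
This already produces the target product, up to the factor $\rto\ell{m_\ell}^{-1}$.

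For the perturbation factor I will repeat the estimate from the proof of \cref{th:2-2}:
\[
\|D_2\|_2 \leq \|\Gamma_3^m\|_2\,\|\inv{S_2}\|_2\,\|\inv{\Gamma_2^m}\|_2 \lesssim \rto 1{-m_\ell}\cdot\frac{\rto 1{m_\ell}}{\rto \ell{m_\ell}} = \rto \ell{-m_\ell}.
\]
This gives $\cd(\Ibb - D_2) \leq (\rto \ell{m_\ell}+1)/(\rto \ell{m_\ell}-1)$. The hypothesis $1<\rto \ell{m_\ell}<1+\sqrt2$ then makes $\eta \doteq (\rto \ell{m_\ell}+1)/(\rto \ell{m_\ell}(\rto \ell{m_\ell}-1)) > 1$, exactly as before. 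Multiplying the three factors yields the claim.

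The main obstacle is justifying $\cd(Z)\approxeq\cd(B)$ rather than only the upper bound $(1+\sigma_1(B))/\sigma_{\ell-k}(B)$. I also need the hypothesis $A^HB=0$ of \cref{th:deflation}, which holds only up to $\odg{\epsilon^\bot}$ and the neglected $Q_3$ terms. The plan is to treat these as small perturbations, using Weyl's inequality for the singular values, and to absorb them into the $\lesssim$. To pass from $1+\sigma_1(B)$ to $\sigma_1(B)$, I need $\sigma_1(B)\gtrsim 1$. If instead, under the normalization above, $\sigma_1(B)\leq 1$, the bound reads $\cd(Z)\lesssim 2/\sigma_{\ell-k}(B)$. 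In that case I will check that $1/\sigma_{\ell-k}(B)\lesssim \rto 1{m_\ell}/\rto\ell{m_\ell}$ remains dominated by the stated product, since $\rto{k+1}{m_{k+1}}\geq\rto 1{m_{k+1}}$ cannot be used directly. This normalization bookkeeping is where I expect to need the most care.
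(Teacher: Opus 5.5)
Your main chain (factoring $B=(\Ibb-D_2)X_2\Gamma_2^mS_2$, bounding $\|D_2\|_2\le\rto\ell{-m_\ell}$, reading off $\cd(\Gamma_2^m)=\rto{k+1}{m_{k+1}}\rto1{m_\ell-m_{k+1}}/\rto\ell{m_\ell}$, and reusing the same $\eta$) is exactly the paper's proof. The trouble is the step you single out, $\cd(Z)\approxeq\cd(B)$. It is a genuine gap, and the paper does not close it either: it simply asserts $\sigma_1(B)\gg1$.

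You are right that this contradicts the normalization \eqref{eq:gammas-2}. Every diagonal entry $(|\rho_a|/|\rho_1|)^{m_a}$ of $\Gamma_2^m$ is below $1$, $\|\Gamma_3^m\|_2=\rto1{-m_\ell}<1$, and $\|S_2x\|_2^2+\|S_3x\|_2^2\le\|x\|_2^2$, so $\sigma_1(B)<1$. But your fallback for this case cannot work. In the paper's model $W=X_1Q_1$ with $Q_1$ unitary and $W^HB=0$ exactly. Hence $Z^HZ=\diag(I_k,B^HB)$ and
\[
\cd(Z)=\frac{\max\{1,\sigma_1(B)\}}{\min\{1,\sigma_{\ell-k}(B)\}}=\frac{1}{\sigma_{\ell-k}(B)} .
\]
Using $\|D_2\|_2\le\rto\ell{-m_\ell}$ and $\|S_2\|_2\le1$, you get $\sigma_{\ell-k}(B)\le\|\Ibb-D_2\|_2\,\sigma_{\min}(\Gamma_2^m)\,\|S_2\|_2\le(\rto\ell{m_\ell}+1)/\rto1{m_\ell}$. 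With $x\doteq\rto\ell{m_\ell}$ this gives
\[
\frac{\cd(Z)}{\eta\,\rto{k+1}{m_{k+1}}\rto1{m_\ell-m_{k+1}}}\;\ge\;\frac{x(x-1)}{(x+1)^2}\left(\frac{|\rho_1|}{|\rho_{k+1}|}\right)^{m_{k+1}} .
\]
The inequality $\rto{k+1}{m_{k+1}}\ge\rto1{m_{k+1}}$ you mention is not merely unusable; it is false, because $\lambda_1$ is the eigenvalue farthest from $[\alpha,\beta]$. So for any admissible $x$ the right-hand side grows exponentially with $m_{k+1}$, and it cannot be absorbed into $\eta$ or into $\lesssim$. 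In fact, in this normalization locking gains nothing: $\cd(Z)$ is of the order of the unlocked estimate $\rto1{m_\ell}$, up to factors depending only on $x$.

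What the argument needs is an explicit hypothesis on the scale of the filtered block relative to the unit-norm locked block. By the identity above, $\cd(Z)=\cd(B)$ exactly when $\sigma_{\ell-k}(B)\le1\le\sigma_1(B)$. This holds, for instance, if the filtered block is rescaled by a common scalar so that $\sigma_1(B)\approx1$; for a single degree this amounts to normalizing the filter at $\lambda_{k+1}$ rather than at $\lambda_1$. Since $\cd(\Gamma_2^m)$ and $D_2$ are invariant under a common rescaling of $\Gamma_2^m$ and $\Gamma_3^m$, the rest of your proof then goes through unchanged. Weyl's inequality absorbs the $\odg{\epsilon^\bot}$ loss of orthogonality as you planned. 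Without such an assumption, the claimed bound fails in the paper's own model by the exponentially large factor above, so no argument can yield it.

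A smaller point: the remark giving $\|S_1\|_2=\odg{\epsilon^\bot}$ does not by itself give $\cd(S_2)\approx1$. Since $S_2^HS_2=I-S_1^HS_1-S_3^HS_3$, you also need $S_3$ to be small. Like the paper, you are really inheriting statement 2 of Lemma~\ref{th:2-1} as an assumption.
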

\begin{proof}
The proof of the corollary follows exactly the same steps of \cref{th:2-2}. We start from the \eqref{eq:condZ} write the condition number as a product of condition numbers
\[
\cd \left(\Gamma_2^m S_2 + \Gamma_3^m S_3)\right) \leq \cd (\Ibb - D) \cd (X_2 \Gamma_2^m S_2)
\]
with the only difference that now $D = X_3 \Gamma_3^m S_3 S_2^{-1} \inv{\Gamma_2^m} X_2^H$. Because the locking and deflation mechanism, $\Gamma_2^m$ has a different asymptotic form than $\Gamma_\ell^m$, namely
\[
\Gamma_2^{m}  = \diag (\frac{\rto{k+1}{m_{k+1}}}{\rto 1{m_{k+1}}}, \dots, \frac{\rto \ell{m_\ell}}{\rto 1{m_\ell}}).
\]
Consequently $\|D\|_2$ can be bound from above by $\rto \ell{-m_\ell}$ as in \ref{th:2-2} while  \[
\cd (X_2 \Gamma_2^m S_2) \leq \frac{\rto{k+1}{m_{k+1}} \rto 1{m_\ell - m_{k+1}}}{\rto \ell{m_\ell}}
\]
Putting everything together results in
\begin{equation}
    \cd (Z) \lesssim \cd (\Gamma_2^m S_2 + \Gamma_3^m S_3) \leq \frac{\rto \ell{m} + 1}{\rto \ell{m}(\rto \ell{m} -1)} \cdot  \rto{k+1}{m_{k+1}} \rto 1{m_\ell - m_{k+1}}.
\end{equation}
Once again, under the condition of the corollary one can define $\eta \doteq \frac{\rto \ell{m} + 1}{\rto \ell{m} (\rto \ell{m} -1)} > 1$ which completes the proof.
\end{proof}

\section{Numerical experiments}
\label{sec-4}

\begin{comment}
\textit{
\begin{itemize}
    \item Description of how the bounds have been integrated in ChASE together with an automatic mechanism of algorithm selection
    \item experiments comparing the bound from above of the condition number for several system and increasing iteration number (similar to ChASE-cholQR manuscript)
    \item experiments comparing performance gains obtained using bound estimates to switch to progressively less ill-conditioned array of vectors as the ChASE iteration index increases 
\end{itemize}
}
\end{comment}

The numerical behavior and the benchmark of ChASE has been performed on the cluster JURECA-DC  at J\"ulich Supercomputing Centre in Germany, which has 480 standard homogeneous compute nodes. Each node is equipped two 64 cores AMD EPYC 7742 CPUs @ 2.25 GHz ($16 \times 32$ GB DDR4 Memory), and the interconnects are InfiniBand HDR100 Mellanox Connect-X6. ChASE has been compiled with the default software stack on JURECA-DC.
The C/C++ compiler is GCC 13.3.0, the MPI library is OpenMPI 5.0.5, and the BLAS/LAPACK libraries are Intel MKL 2024.2.0. 

For the tests on JURECA-DC, the number of MPI ranks per node and OpenMP threads per rank are respectively $4$ and $32$. The threshold tolerance for residuals {\tt tol} is fixed as $10^{-10}$, and the degree optimization of ChASE filter is always enabled unless otherwise specified. All tests in this paper are performed in double-precision.

\subsection{Parallel Scheme in ChASE}

ChASE employs a hybrid parallelization strategy tailored to efficiently handle large-scale eigenproblems on distributed-memory systems. The Chebyshev filter is applied on a 2D MPI process grid, with the number of processes chosen to form a nearly square grid whenever possible, which is preferred to balance communication and computation. The filtering procedure exploits the Hermitian or symmetric structure of the target matrix to optimize the matrix–matrix multiplication kernel. Depending on the polynomial degree of the filter, this operation may involve collective allreduce operations either along the row or column communicators of the 2D grid.

The QR factorization within ChASE, in contrast, is performed on a 1D column communicator derived from the 2D grid. Each column communicator contains a subset of MPI processes corresponding to a vertical partition of the matrix. Within each communicator, the QR factorization is executed in parallel, while the resulting factorization is redundant across different column communicators. %This design reduces communication overhead and allows the QR to scale independently of the global 2D grid size.

Such a separation of parallelism---2D for filtering, 1D for QR---ensures that both the computationally dominant filter and the numerically critical QR factorization achieve high efficiency. The chosen scheme also naturally accommodates the optimized filter profile based on the matrix structure. For further details on the parallel design, implementation choices, and the reasoning behind the column-redundant QR, we refer the reader to \cite{Wu2022chase}.

\subsection{Test Matrix Suite: DFT and BSE matrices}

For the numerical test and benchmarks, we make use of a collection of eigenproblems coming from domain applications. Essential details of these problems are listed in Table \ref{tab:real_world matrix}, including a short acronym, the size, the number of eigenpairs sought after, the size of extra searching space used in ChASE, the application software used to extract them, the type of each problem, \nev--the number of eigenparis to compute required from real world applications, and \nex--the size of external searching space.

The \text{FLEUR} problems are generated by the FLEUR code \cite{wortmann2023fleur}, which implements a full-potential linearized augmented plane wave (FLAPW) all-electron method. In this approach, the wavefunctions are expanded using augmented plane waves inside muffin-tin spheres and plane waves in the interstitial region, leading to a highly accurate but computationally intensive basis set. In contrast, the FHI-aims DFT calculations \cite{blum2009ab} employ numeric atom-centered orbitals (NAOs) as basis functions, which are strictly localized around atoms and enable efficient evaluation of integrals. This results in Hamiltonian and overlap matrices with different spectral properties compared to FLAPW-based FLEUR matrices. Specifically, FHI-aims matrices tend to have smaller bandwidth and more clustered eigenvalues due to the local nature of the orbitals, whereas FLEUR matrices are typically denser and have more evenly distributed eigenvalues across the spectrum.

The BSE UIUC problems are obtained from a fork of the Jena BSE code developed at the University of Illinois Urbana-Champaign \cite{zhang2021solving}, which builds on the underlying DFT Hamiltonians to compute electron-hole excitations. These problems typically exhibit a clustered spectrum of eigenvalues corresponding to low-energy excitations, which poses distinct challenges for iterative eigenvalue solvers.

By including eigenproblems from DFT calculations coming two distinct application codes using different discretization schemes, as well as from BSE simulations, the test suite  covers a range of spectral characteristics, matrix sizes, and numerical complexities, providing a comprehensive benchmark for evaluating the performance and robustness of ChASE. As the spectra of matrices from FHI-aims and Jena BSE is typically denser and more clustered, it requires a relatively larger searching space \nex compared to the corresponding \nev (see Table \ref{tab:real_world matrix}).

\subsection{Exact Condition number vs estimated}

In this section, we illustrate the effectiveness of the upper bounds introduced in Section \ref{sec-3} for the condition number of the rectangular matrix filtered by the Chebyshev filter\footnote{If some eigenpairs are locked, this should be the union of the locked eigenvectors and the filtered unconverged ones}. The test problems considered here are the ones listed in Table \ref{tab:real_world matrix}. The results are shown in Fig. \ref{fig:condEst-all}, which compare the estimated condition number $\cdest$ against an accurate computation of the condition number $\cd$ of the filtered matrices at each iteration of ChASE until convergence.

We performed the tests on the JURECA-DC cluster, using 4 nodes. All tests were carried out either with the Chebyshev polynomial degree optimization enabled ({\it opt}) or disabled ({\it no-opt}) to highlight how the condition number estimation depends on the optimization mechanism. For {\it no-opt}, the Chebyshev polynomial degree is fixed to $20$ at every iteration. For {\it opt}, the initial degree is $20$ for the first iteration and is dynamically adjusted at later iterations for each filtered vector with a maximal degree capped at $36$ to prevent excessive ill-conditioning.

As already defined in the previous sections, given a rectangular matrix $M$ its condition number $\cd$ is equal to $\frac{\sigma_{max}(M)}{\sigma_{min}(M)}$.  where $\sigma_{max}(M)$ and $\sigma_{min}(M)$ represent the maximal and minimal singular values of $M$. For this test, the matrix $M$ is distributed along the column communicator, and its singular values are computed directly via ScaLAPACK parallel SVD routines, specifically {\sf PDGESVD} and {\sf PZGESVD} for the real and complex double-precision cases, respectively. The estimated condition number $\cdest$ is given, in its most general form by Eq.~\eqref{eq:full-est}, which reduces to $\rto {k+1}{m}$ when the degree optimization is disabled ({\it no-opt}). The case without deflation and locking (Eq.~\ref{eq:simple-est}) is automatically recovered at the very first 1-2 iterations when no eigenpair has yet converged.

\begin{table}[t]
\footnotesize
	\renewcommand{\arraystretch}{1.3}
	\caption{List of DFT matrices for numerical tests.}\label{tab:real_world matrix}
	\centering
	\begin{tabular}{C{2.5cm}|cccC{2.cm}C{2.cm} c}
		\toprule
  Name & $N$ & {\nev} & {\nex} & Problem Type&Code Source & Type \\
  \midrule
  \textsc{NaCl 9k}   & 9273 & 256 & 60 & DFT &\textbf{FLEUR} & Hermitian \\
  \hline
  \textsc{AuAg 13k}   & 13379 & 972 & 100 & DFT&\textbf{FLEUR} & Hermitian\\
  \hline  
  \textsc{TiO2 12k}   & 12455 & 1076 & 100 &DFT &\textbf{FLEUR} & Hermitian\\
  \hline  
  \textsc{TiO2 29k}   & 29528 & 2560 & 400 & DFT&\textbf{FLEUR} & Hermitian\\
  \hline  
  \textsc{HfO2 62k}    & 62681 & 100 & 100 & BSE&\textbf{BSE UIUC}& Hermitian \\  
  \hline  
  \textsc{In2O3 76k}  & 76887 & 120 & 60 & BSE&\textbf{BSE UIUC}& Hermitian\\
  \hline
  \textsc{In2O3 115k}  & 115459 & 2400 & 1600 & BSE &\textbf{BSE UIUC}& Hermitian \\
  \hline  
  \textsc{Si Graphene 50k}    & 51476 & 2400 & 1600 & DFT&\textbf{FHI-aims} & Symmetric\\
  \hline  
  \textsc{Cu$_2$BaSnS$_4$ 80k}    & 80250 & 3600 & 2400 & DFT&\textbf{FHI-aims} & Symmetric\\  
		\bottomrule
	\end{tabular}
\end{table}

\begin{figure}[htbp]
\centering

% Row 1
\condEstSubplot{NaCl-9k}{(a) NaCl 9k}
\hfill
\condEstSubplot{AuAg-13k}{(b) AuAg 13k}
\hfill
\condEstSubplot{TiO2-12k}{(c) TiO$_2$ 12k}
\vspace{0.8em}
% Row 2
\condEstSubplot{TiO2-29k}{(d) TiO$_2$ 29k}
\hfill
\condEstSubplot{HfO2-62k}{(e) HfO2 62k}
\hfill
\condEstSubplot{In2O3-76k}{(f) In$_2$O$_3$ 76k}

\vspace{0.8em}
% Row 3
\condEstSubplot{In2O3-115k}{(g) In$_2$O$_3$ 115k}
\hfill
\condEstSubplot{SiC_graphene_50k}{(h) SiC graphene 50k}
\hfill
\condEstSubplot{Cu2BaSnS4_80k}{(i) Cu$_2$BaSnS$_4$ 80k}

\vspace{1em}

% Unified legend
\begin{center}
\begin{tikzpicture}
\begin{axis}[
    width=0.8\linewidth,
    height=2cm,
    hide axis,
    xmin=0, xmax=1,
    ymin=0, ymax=1,
    clip=true,
    legend columns=4,
    legend style={
        draw=none,
        font=\small,
        at={(0.5,0.5)},
        anchor=center,
        fill=none,
        cells={anchor=west},
    },
    every axis plot/.append style={opacity=0},
    legend image post style={opacity=1},
]
% Create invisible plots for legend entries only
\addplot[mark=o, solid, line width=1pt, mark size=1.8pt, color=blue] coordinates {(0.5,0.5) (0.6,0.6)};
\addlegendentry{$\cd$ (no-opt)}
\addplot[mark=o, dotted, line width=1pt, mark size=1.8pt, color=red] coordinates {(0.5,0.5) (0.6,0.6)};
\addlegendentry{$\cdest$ (no-opt)}
\addplot[mark=triangle*, solid, line width=1pt, mark size=1.8pt, color=black] coordinates {(0.5,0.5) (0.6,0.6)};
\addlegendentry{$\cd$ (opt)}
\addplot[mark=triangle*, dotted, line width=1pt, mark size=1.8pt, color=green!40!black] coordinates {(0.5,0.5) (0.6,0.6)};
\addlegendentry{$\cdest$ (opt)}
\end{axis}
\end{tikzpicture}
\end{center}

\vspace{0.5em}
\caption{Condition estimates per iteration for various materials with and without optimization. 
Subplots (a)--(i) show results for different material systems.}
\label{fig:condEst-all}
\end{figure}
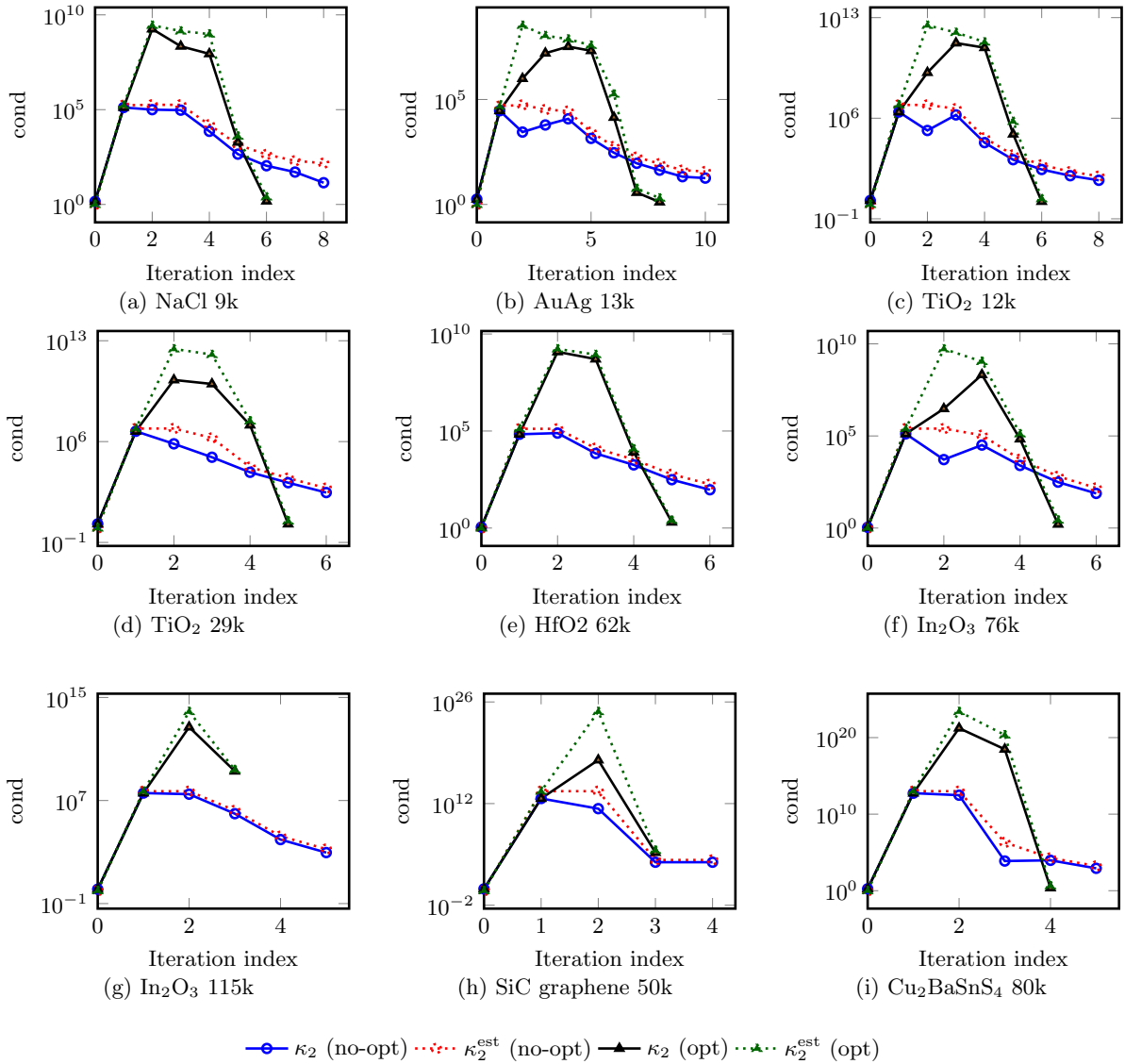

Fig. \ref{fig:condEst-all} shows that the estimated condition number, with and without degree optimization, always bounds from above the computed condition number, making it an effective and reliable estimate. For many cases, the ratio between the $\cdest$ and $\cd$ is always below $2$. For some cases (e.g., {\it opt} case for AuAg 13k or TiO$_2$ 12k), this ratio can be larger than $\mathcal{O}(10^4)$ for a few initial iterations, where no eigenpairs have been locked yet. This may be caused by the facts that the estimates for the parameters $e$ and $c$ provided by ChASE could be quite inaccurate for the first 2 to 3 iterations. Independently from its strictness, $\cdest$ always bound $\cd$ from above and so it constitutes a reliable parameter for ChASE to switch from a more stable QR factorization, such as $s$-Cholesky QR factorization, to the less stable but more efficient CholeksyQR2, even CholeskyQR1 in the last one or two iterations where the estimations are $\mathcal{O}(1)$. 

In the {\it no-opt} case, the highest condition number comes at the first iteration. To clarify, in Fig. \ref{fig:condEst-all}, step $0$ corresponds to the QR factorization of the randomly generated initial guess vectors, occurring before the first application of the Chebyshev filter. Therefore, if the condition number of $M$ at the first iteration is below a certain threshold, the $s$-CholeskyQR2 can be avoided in any of the following iterations. Conversely, in the {\it opt} case the condition number of $M$ at the early stage can be much larger than the one at the first iteration. This effect is caused by the higher maximal degree allowed during the degree optimization procedure and it is controllable by the user by setting a relative smaller maximal degree for the degree optimization. Observe, though, that ChASE with degree optimization can always converge considerably faster than ChASE without degree optimization (e.g., In$_2$O$_3$ 115k). In conclusion, our upper bound estimation of the condition number for the filtered matrix ensure that the proposed heuristic to switch between QR variants is reliable throughout the entire execution cycle of ChASE. 

\subsection{ChASE with CholeskyQR vs Householder QR}

In this section, we compare the numerical behavior of ChASE using either a Householder QR (HHQR) factorization at every iteration or the automatic selection of QR variants based on the heuristic introduced in Section \ref{sec:Replacing Householder QR with Communication-Avoiding CholeskyQR Variants}. Here, HHQR refers to the Householder QR implementation provided by ScaLAPACK, executed on a 1D MPI grid independently over each column communicator. The block size for the ScaLAPACK block-cyclic distribution is set to $64$. All tests in this section were run on $4$ nodes of the JURECA-DC cluster, using all available CPU cores. Each reported value corresponds to the average of $5$ repetitions.

The test problems are those listed in Table \ref{tab:real_world matrix}. In all cases, the matrix to be factorized has dimensions $N\times(\nev+\nex)$. Tables \ref{table:hh_vs_chol_cpu} reports the total number of matrix-vector multiplications (MatVecs)\footnote{At each ChASE iteration, the Chebyshev filter is applied to a rectangular matrix $M$ where each column may use a different polynomial degree. The number of matrix-vector multiplications (MatVecs) is accumulated over all unconverged eigenvectors and all iterations, providing a measure of the computational work in the filter and offering insight into the convergence behavior of individual filtered vectors.}
, the number of ChASE iterations to convergence, the total time-to-solution, and the time spent in the QR factorization.
\begin{table}[t]
\footnotesize
\renewcommand{\arraystretch}{1.3}
\caption{Comparison of ChASE-CPU with HHQR and CholeksyQR}\label{table:hh_vs_chol_cpu}
\centering
\begin{tabular}{C{2.5cm}|C{1.8cm}|C{1.2cm}C{1.4cm}C{1.1cm}C{1.8cm}}
\toprule
Type                     & QR Impl.   & MatVecs & Iters. & All (s) & QR (s) \\ \midrule
\multirow{2}{*}{\textsc{NaCl 9k}}
                         & HHQR   & 34,206 & 6 & 8.64 & 1.34  \\ 
                         & CholeskyQR  & 34,212 & 6 & 8.14 & 0.51 \\ \hline
\multirow{2}{*}{\textsc{AuAg 13k}}
                         & HHQR   & 106,542 & 8 & 40.13 & 8.72 \\ 
                         & CholeskyQR & 106,618  & 8 & 32.96 & 1.72  \\ \hline
\multirow{2}{*}{\textsc{TiO2 12k}}
                         & HHQR   & 94,724 & 6 & 32.47 & 7.62 \\ 
                         & CholeskyQR & 93,904 & 6 & 26.05 & 1.92  \\ \hline  
\multirow{2}{*}{\textsc{TiO2 29k}}
                         & HHQR   & 212,024 & 5 & 323.21 & 81.56 \\ 
                         & CholeskyQR & 211,984 & 5 & 278.11 & 16.23 \\  \hline 
\multirow{2}{*}{\textsc{HfO2 62k}}
                         & HHQR   & 19,614 & 5 & 201.41 & 5.91 \\ 
                         & CholeskyQR & 19,614 & 5 & 195.33 & 1.95 \\ \hline                             
\multirow{2}{*}{\textsc{In2O3 76k}}
                         & HHQR   & 15,980 & 5 & 284.62 & 6.97 \\  
                         & CholeskyQR & 15,976 & 5 & 278.25 & 1.81 \\  \hline 
\multirow{2}{*}{\textsc{In2O3 115k}}
                         & HHQR   & 229,714 & 3 & 2533.05 & 184.65 \\ 
                         & CholeskyQR & 229,716 & 3 & 2439.24 & 74.91 \\    \hline                        
\multirow{2}{*}{\textsc{Si Graphene 50k}}
                         & HHQR   & 240,554 & 3 & 263.33 & 53.98 \\ 
                         & CholeskyQR & 240,536 & 3 & 208.91 & 8.90 \\  \hline   
\multirow{2}{*}{\textsc{Cu$_2$BaSnS$_4$ 80k}}
                         & HHQR   & 413,084 & 4 & 911.62 & 139.72 \\ 
                         & CholeskyQR & 429,620 & 4 & 747.72 & 30.11 \\                         
\bottomrule
\end{tabular}
\label{tab:chol_vs_hh_cpu}
\end{table}
The results show that ChASE equipped with CholeskyQR achieves numerically identical convergence compared to HHQR: the number of iterations and MatVec operations remain effectively the same for all test problems. The main difference is in performance: CholeskyQR consistently reduces the time spent in QR factorizations, with speedups ranging from $2$-$5\times$ for moderate problem sizes ($\nev\lesssim1000$) and up to $4$-$6\times$ for larger problems ($\nev\gtrsim2000$), leading to a total time-to-solution reduction of 10–20\% depending on the problem.

In conclusion, these results confirm that replacing a stable HHQR with CholeskyQR, guided by the condition number bounds introduced in Section \ref{sec-3}, preserves the numerical behavior of ChASE while providing substantial computational reduction, particularly for problems with a larger number of desired eigenpairs. Detailed parallel scaling and performance analysis are discussed in the following section.

\subsection{Strong scaling}

We evaluate the strong scaling behavior of ChASE using In$_2$O$_3$ 115k eigenproblem defined in Table \ref{tab:real_world matrix} with three eigenpair configurations: 1) {\bf Small} ($\nev=1200$, $\nex=600$); 2) {\bf Medium}  ($\nev=2400$, $\nex=1200$);  3) {\bf Large}  ($\nev=3600$, $\nex=2400$). These labels are used consistently throughout the section to distinguish problem sizes. All runs were performed on the JURECA-DC cluster, scaling from 16 up to 100 compute nodes with $4$ MPI ranks per node, and $32$ OpenMP threads per MPI process. The number of nodes in each benchmark is chosen to form a square MPI process grid in ChASE, which is generally preferred for performance.

Figure \ref{fig:combined_strong_scaling} reports both the total time-to-solution and parallel efficiency, along with the time and efficiency specifically for the QR factorization using CholeskyQR and Householder QR.
In ChASE, QR is performed only within the column communicator—that is, a subset of the global 2D MPI grid. For instance, in a run employing 64 nodes with 256 MPI processes in total, the column communicator typically spans only 16 MPI processes; hence, the QR timing and efficiency reflect the scaling behavior of this smaller communicator.

%\commnt{Edo}{I am not happy with the fact that the Time plots for "All" are in semilog scale while the one for "QR" are in linear scale. Can you please try to use log scale on the vertical axis and linear scale of the horizontal one? Alternatively, we can plot speedups instead of time to solution. Also change to linear scale for the horizontal axis in the "All" efficiency.}
%\commnt{XZ}{Actually, both plot used only linear scaled. Now I updated them to be y-axis with log, and x-axis with linear scale.}

%%%%%%%%%
%%%%%%%%%%
%%%%%%%%%%%
%%%%%%%%%%%%
%%%%%%%%%%%%%

\begin{figure}[t]
\centering

% ----------------- Top row -----------------
\begin{minipage}{0.3\linewidth}
\centering
\pgfplotsset{set layers}
\begin{tikzpicture}
\begin{axis}[
    width=\linewidth,
    height=1.2\linewidth,
    xlabel={Number of MPI ranks},
    ylabel={Time (s)},
    ymode=log,
    xmode=log,
    xtick={16,36,64,100},
    % ----- FORCE TICKS -----
    xtick={16,36,64,100},
    xticklabels={64,144,256,400},
    ytick={100,200,400,800},
    yticklabels={100,200,400,800},  
    scaled x ticks=false,
    scaled y ticks=false,
    title={All: Time},
    tick label style={font=\small},
    label style={font=\small},
    axis line style={black, line width=1pt},
    grid=major,
    grid style={dashed, gray!30},
    line width=1pt
]
    \addplot+[mark=o, solid, mark size=2pt, blue] coordinates {(16,530) (36,255) (64,174) (100,117.5)};
    \addplot+[mark=o, dashed, mark size=2pt, blue] coordinates {(16,570.9) (36,288.1) (64,215.2) (100,147.8)};
    % Upper CI curve
    \addplot[
        name path=upper,
        draw=none
    ] coordinates {
        (16,534) (36,258) (64,177) (100,124)
    };

    % Lower CI curve
    \addplot[
        name path=lower,
        draw=none
    ] coordinates {
        (16,526) (36,252) (64,171) (100,110)
    };

    % Confidence band
    \addplot[
        blue!20
    ] fill between[
        of= lower and upper,
    ];
    
    \addplot[
        name path=upper2,
        draw=none
    ] coordinates {
        (16,579) (36,294) (64,210) (100,152)
    };

    % Lower CI curve
    \addplot[
        name path=lower2,
        draw=none
    ] coordinates {
        (16,561) (36,282) (64,220) (100,143)
    };

    % Confidence band
    \addplot[
        blue!20
    ] fill between[
        of=lower2 and upper2,
    ];
    
% Ideal case line
\addplot[
    black,
    dash dot,
    no markers,
    line width=1pt
] coordinates {
    (16,450)
    (36,200)
    (64,112.5)
    (100,72)
};
\node at (axis cs:86,70) [anchor=south east, black, font=\small] {Ideal};
\end{axis}
\end{tikzpicture}
\end{minipage}
\hfill
\begin{minipage}{0.3\linewidth}
\pgfplotsset{set layers}
\centering
\begin{tikzpicture}
\begin{axis}[
    width=\linewidth,
    height=1.2\linewidth,
    xlabel={Number of MPI ranks},
    ylabel={Time (s)},
    xmode=log,
    ymode=log,
    % ----- FORCE TICKS -----
    xtick={16,36,64,100},
    xticklabels={64,144,256,400},
    ytick={300,600,1200,2400},
    yticklabels={300,600,1200, 2400},    
    scaled x ticks=false,
    scaled y ticks=false,
    title={All: Time},
    tick label style={font=\small},
    label style={font=\small},
    grid=major,
    axis line style={black, line width=1pt},
    grid style={dashed, gray!30},
    line width=1pt
]
    % Upper CI curve
    \addplot[
        name path=upper,
        draw=none
    ] coordinates {
        (16,1145) (36,575) (64,400) (100,300)
    };

    % Lower CI curve
    \addplot[
        name path=lower,
        draw=none
    ] coordinates {
        (16,1135) (36,565) (64,380) (100,290)
    };

    % Confidence band
    \addplot[
        red!20
    ] fill between[
        of=upper and lower,
    ];
    
    \addplot[
        name path=upper2,
        draw=none
    ] coordinates {
        (16,1185) (36,640) (64,455) (100,365)
    };

    % Lower CI curve
    \addplot[
        name path=lower2,
        draw=none
    ] coordinates {
        (16,1175) (36,634) (64,439) (100,357)
    };

    % Confidence band
    \addplot[
        red!20
    ] fill between[
        of=upper2 and lower2,
    ];

    \addplot[
        name path=upper3,
        draw=none
    ] coordinates {
        (16,1900) (36,1049) (64,810) (100,703)
    };

    % Lower CI curve
    \addplot[
        name path=lower3,
        draw=none
    ] coordinates {
        (16,1880) (36,1047) (64,808) (100,701)
    };

    % Confidence band
    \addplot[
        green!20
    ] fill between[
        of=upper3 and lower3,
    ];

    \addplot[
        name path=upper4,
        draw=none
    ] coordinates {
        (16,1915) (36,1192) (64,866) (100,760)
    };

    % Lower CI curve
    \addplot[
        name path=lower4,
        draw=none
    ] coordinates {
        (16,1905) (36,1188) (64,860) (100,750)
    };

    % Confidence band
    \addplot[
        green!20
    ] fill between[
        of=upper4 and lower4,
    ];
    
%\addplot+[mark=o, solid, mark size=2pt, blue] coordinates {(16,450.9) (36,275.1) (64,151.2) (100,105.8)};
%\addplot+[mark=o, dashed, mark size=2pt, blue] coordinates {(16,487.4) (36,316.1) (64,181.1) (100,129.4)};
\addplot+[mark=square, solid, mark size=2pt, red] coordinates {(16,1140) (36,570) (64,390) (100,295)};
\addplot+[mark=square, dashed, mark size=2pt, red] coordinates {(16,1180) (36,637) (64,452) (100,361)};
\addplot+[mark=triangle, solid, mark size=2pt, green!60!black] coordinates {(16,1890) (36,1048) (64,809) (100,702)};
\addplot+[mark=triangle, dashed, mark size=2pt, green!60!black] coordinates {(16,1910) (36,1190) (64,863) (100,755)};
% Ideal case line
\addplot[
    black,
    dash dot,
    no markers,
    line width=1pt
] coordinates {
    (16,1600)
    (36,711)
    (64,400)
    (100,256)
};
\node at (axis cs:82,640) [anchor=south east, black, font=\small] {Ideal};
\end{axis}
\end{tikzpicture}
\end{minipage}
\hfill
\begin{minipage}{0.3\linewidth}
\pgfplotsset{set layers}
\centering
\begin{tikzpicture}
\begin{axis}[
    width=\linewidth,
    height=1.2\linewidth,
    xlabel={Number of MPI ranks},
    ylabel={Efficiency},
    xtick={16,36,64,100},
    xticklabels={64,144,256,400},
    title={All: Efficiency},
    tick label style={font=\small},
    label style={font=\small},
    grid=major,
    axis line style={black, line width=1pt},
    grid style={dashed, gray!30},
    line width=1pt
]

\addplot+[mark=o, solid, mark size=2pt, blue] coordinates {(16,1.0000) (36,530*16/255/36) (64,530*16/174/64) (100,530*16/117/100)};
\addplot+[mark=o, dashed, mark size=2pt, blue] coordinates {(16,1.0000) (36,570*16/288/36) (64,570*16/215/64) (100,570*16/147/100)};
\addplot+[mark=square, solid, mark size=2pt, red] coordinates {(16,1.0000) (36,1140*16/570/36) (64,1140*16/390/64) (100,1140*16/295/100)};
\addplot+[mark=square, dashed, mark size=2pt, red] coordinates {(16,1.0000) (36,1180*16/637/36) (64,1180*16/452/64) (100,1180*16/361/100)};
\addplot+[mark=triangle*, solid, mark size=2pt, green!60!black] coordinates {(16,1.0000) (36,1890*16/1048/36) (64,1890*16/809/64) (100,1890*16/702/100)};
\addplot+[mark=triangle*, dashed, mark size=2pt, green!60!black] coordinates {(16,1.0000) (36,1910*16/1190/36) (64,1910*16/863/64) (100,1910*16/755/100)};
\addplot[
    black,
    dash dot,
    no markers,
    line width=1pt
] coordinates {
    (16,1)
    (36,1)
    (64,1)
    (100,1)
};
\node at (axis cs:90,0.92) [anchor=south east, black, font=\small] {Ideal};
\end{axis}
\end{tikzpicture}
\end{minipage}

\vspace{1em}

% ----------------- Bottom row -----------------
\begin{minipage}{0.3\linewidth}
\pgfplotsset{set layers}
\centering
\begin{tikzpicture}
\begin{axis}[
    width=\linewidth,
    height=1.2\linewidth,
    xlabel={Number of MPI ranks},
    ylabel={Time (s)},
    ymode=log,
    xmode=log,
    xtick={4,6,8,10},
    xticklabels={8,12,16,20},
    ytick={5,10,20,40},
    yticklabels={5,10,20,40},      
    title={QR: Time},
    tick label style={font=\small},
    label style={font=\small},
    grid=major,
    axis line style={black, line width=1pt},
    grid style={dashed, gray!30},
    line width=1pt
]
\addplot+[mark=o, solid, mark size=2pt, blue] coordinates {(4,19) (6,14) (8,11.2) (10,9.2)};
\addplot+[mark=o, dashed, mark size=2pt, blue] coordinates {(4,53) (6,43) (8,41) (10,38)};

    % Upper CI curve
    \addplot[
        name path=upper,
        draw=none
    ] coordinates {
        (4,22) (6,14.5) (8,12) (10,9.5)
    };

    % Lower CI curve
    \addplot[
        name path=lower,
        draw=none
    ] coordinates {
        (4,16) (6,13.5) (8,10) (10,8.9)
    };

    % Confidence band
    \addplot[
        blue!20
    ] fill between[
        of= lower and upper,
    ];
    
    \addplot[
        name path=upper2,
        draw=none
    ] coordinates {
        (4,56) (6,46) (8,43) (10,40)
    };

    % Lower CI curve
    \addplot[
        name path=lower2,
        draw=none
    ] coordinates {
        (4,50) (6,40) (8,38) (10,36)
    };

    % Confidence band
    \addplot[
        blue!20
    ] fill between[
        of=lower2 and upper2,
    ];
    
\addplot[
    black,
    dash dot,
    no markers,
    line width=1pt
] coordinates {
    (4,33)
    (6,22)
    (8,16.5)
    (10,13.2)
};
\node at (axis cs:9.9,18) [anchor=south east, black, font=\small] {Ideal};
\end{axis}
\end{tikzpicture}
\end{minipage}
\hfill
\begin{minipage}{0.3\linewidth}
\pgfplotsset{set layers}
\centering
\begin{tikzpicture}
\begin{axis}[
    width=\linewidth,
    height=1.2\linewidth,
    xlabel={Number of MPI ranks},
    ylabel={Time (s)},
    ymode=log,
    xmode=log,
    xtick={4,6,8,10},
    xticklabels={8,12,16,20},
    ytick={20,40,80,160},
    yticklabels={20,40,80,160},  
    title={QR: Time},
    tick label style={font=\small},
    label style={font=\small},
    grid=major,
    axis line style={black, line width=1pt},
    grid style={dashed, gray!30},
    line width=1pt
]
\addplot+[mark=square, solid, mark size=2pt, red] coordinates {(4,75) (6,57) (8,46) (10,36)};
\addplot+[mark=square, dashed, mark size=2pt, red] coordinates {(4,127) (6,103) (8,91.8) (10,86.3)};
\addplot+[mark=triangle, solid, mark size=2pt, green!60!black] coordinates {(4,198) (6,150) (8,124) (10,100)};
\addplot+[mark=triangle, dashed, mark size=2pt, green!60!black] coordinates {(4,223) (6,178) (8,156) (10,146)};

    % Upper CI curve
    \addplot[
        name path=upper,
        draw=none
    ] coordinates {
        (4,78) (6,59) (8,48) (10,37)
    };

    % Lower CI curve
    \addplot[
        name path=lower,
        draw=none
    ] coordinates {
        (4,72) (6,55) (8,44) (10,35)
    };

    % Confidence band
    \addplot[
        red!20
    ] fill between[
        of=upper and lower,
    ];
    
    \addplot[
        name path=upper2,
        draw=none
    ] coordinates {
        (4,127) (6,103) (8,92) (10,87)
    };

    % Lower CI curve
    \addplot[
        name path=lower2,
        draw=none
    ] coordinates {
        (4,127) (6,103) (8,91.6) (10,85.6)
    };

    % Confidence band
    \addplot[
        red!20
    ] fill between[
        of=upper2 and lower2,
    ];

    \addplot[
        name path=upper3,
        draw=none
    ] coordinates {
        (4,200) (6,155) (8,126) (10,102)
    };

    % Lower CI curve
    \addplot[
        name path=lower3,
        draw=none
    ] coordinates {
        (4,196) (6,145) (8,122) (10,98)
    };

    % Confidence band
    \addplot[
        green!20
    ] fill between[
        of=upper3 and lower3,
    ];

    \addplot[
        name path=upper4,
        draw=none
    ] coordinates {
        (4,223) (6,179) (8,157) (10,146)
    };

    % Lower CI curve
    \addplot[
        name path=lower4,
        draw=none
    ] coordinates {
        (4,223) (6,177) (8,155) (10,146)
    };

    % Confidence band
    \addplot[
        green!20
    ] fill between[
        of=upper4 and lower4,
    ];
    
\addplot[
    black,
    dash dot,
    no markers,
    line width=1pt
] coordinates {
    (4,120)
    (6,80)
    (8,60)
    (10,48)
};
\node at (axis cs:9.9,40) [anchor=south east, black, font=\small] {Ideal};
\end{axis}
\end{tikzpicture}
\end{minipage}
\hfill
\begin{minipage}{0.3\linewidth}
\centering
\begin{tikzpicture}
\begin{axis}[
    width=\linewidth,
    height=1.2\linewidth,
    xlabel={Number of MPI ranks},
    ylabel={Efficiency},
    xtick={4,6,8,10},
    xticklabels={8,12,16,20},
    title={QR: Efficiency},
    tick label style={font=\small},
    label style={font=\small},
    grid=major,
    axis line style={black, line width=1pt},
    grid style={dashed, gray!30},
    line width=1pt
]

\addplot+[mark=o, solid, mark size=2pt, blue] coordinates {(4,1.0000) (6,19*4/6/14) (8,19*4/8/11.2) (10,19*4/10/9.2)};
\addplot+[mark=o, dashed, mark size=2pt, blue] coordinates {(4,1.0000) (6,53*4/6/43) (8,53*4/8/41.2) (10,53*4/10/38)};
\addplot+[mark=square, solid, mark size=2pt, red] coordinates {(4,1.0000) (6,75*4/6/57) (8,75*4/8/44) (10,75*4/10/36)};
\addplot+[mark=square, dashed, mark size=2pt, red] coordinates {(4,1.0000) (6,127*4/6/103) (8,127*4/8/91.8) (10,127*4/10/86.3)};
\addplot+[mark=triangle*, solid, mark size=2pt, green!60!black] coordinates {(4,1.0000) (6,198*4/6/150) (8,198*4/8/124) (10,198*4/10/100)};
\addplot+[mark=triangle*, dashed, mark size=2pt, green!60!black] coordinates {(4,1.0000) (6,223*4/6/178) (8,223*4/8/156) (10,223*4/10/146)};
\addplot[
    black,
    dash dot,
    no markers,
    line width=1pt
] coordinates {
    (4,1)
    (6,1)
    (8,1)
    (10,1)
};
\node at (axis cs:10,0.94) [anchor=south east, black, font=\small] {Ideal};
\end{axis}
\end{tikzpicture}
\end{minipage}

% ----------------- Shared legend below all plots -----------------
\vspace{1em}
\begin{tikzpicture}
\begin{axis}[
    hide axis,
    xmin=0, xmax=1,
    ymin=0, ymax=1,
    legend columns=3,
    legend style={
        draw=none,
        column sep=1ex
    }
]
% define legend entries without plotting points
\addlegendimage{mark=o, solid, blue}
\addlegendimage{mark=square, solid, red}
\addlegendimage{mark=triangle*, solid, green!60!black}
\addlegendimage{mark=o, dashed, blue}
\addlegendimage{mark=square, dashed, red}
\addlegendimage{mark=triangle*, dashed, green!60!black}

\legend{
Small (CholeskyQR),
Medium (CholeskyQR),
Large (CholeskyQR),
Small (HouseholderQR),
Medium (HouseholderQR),
Large (HouseholderQR)
}
\end{axis}
\end{tikzpicture}

\caption{Strong-scaling performance and parallel efficiency comparison between CholeskyQR and Householder QR across configurations. Each measurement was repeated five times. Runtime plots show the average execution time, with shaded bands indicating the minimum–maximum range across runs. Parallel efficiency is computed from the averaged runtimes.}
\label{fig:combined_strong_scaling}
\end{figure}
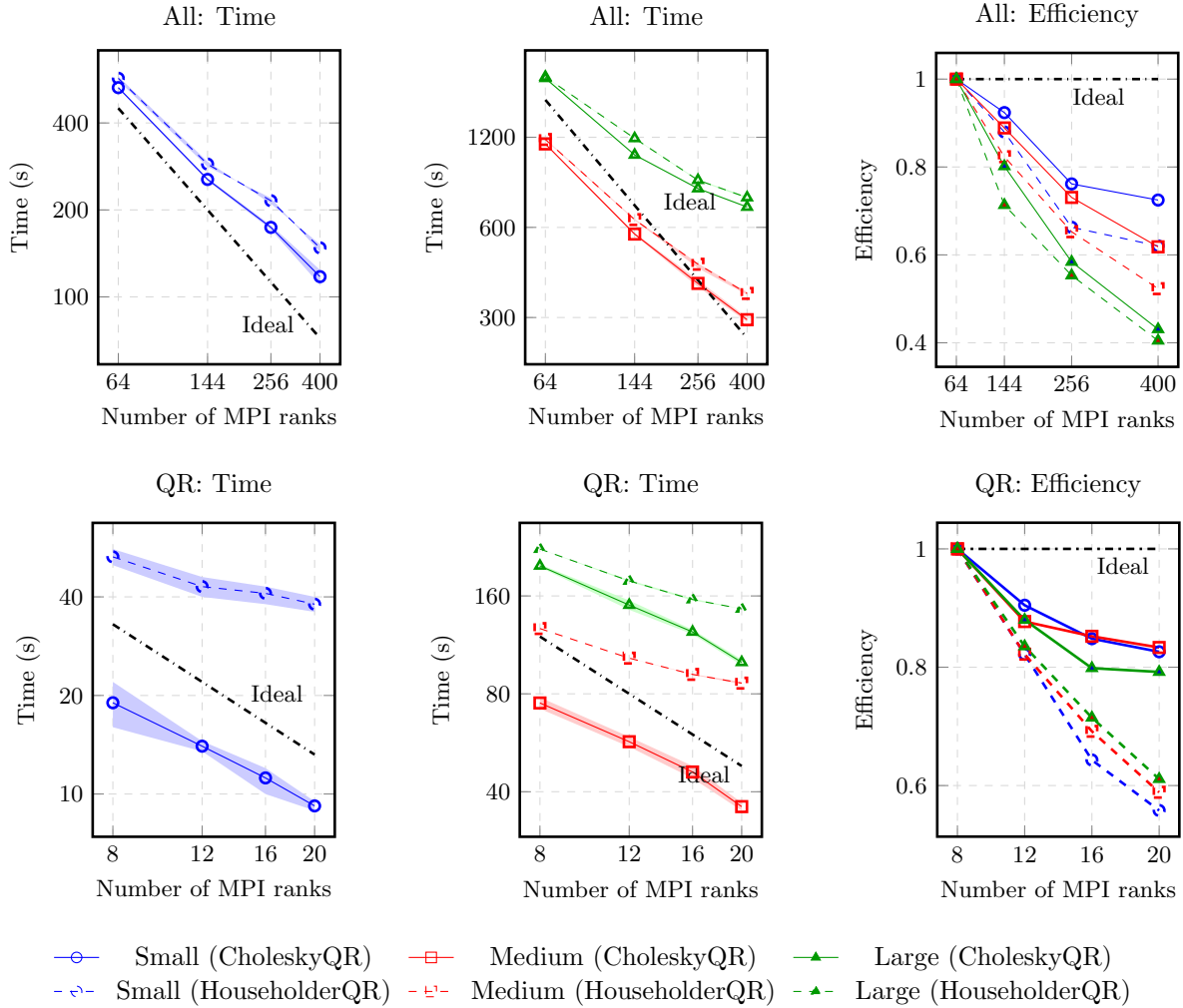

The total runtime decreases consistently with increasing MPI process count across all configurations, demonstrating good strong scaling. When CholeskyQR is used, for the smallest case ($\nev+\nex=1800$), the total time drops from approximately $534$s on 16 nodes to $124$s on $100$ nodes. For the largest configuration ($\nev+\nex=6000$), the runtime decreases from about 1890 on $16$ nodes to $702$s on $100$ nodes.
Parallel efficiency relative to the smallest node count declines moderately as expected with increased communication overhead: at $100$ nodes, it ranges from $0.62$-$0.72$ for the smallest configuration to $0.40$-$0.43$ for the largest one. Notice that the parallel efficiency for medium and large $\nev$ degrades more rapidly since both the QR factorization and Rayleigh-Ritz procedure become more relevant overall and do not scale as nicely as the polynomial filter. 

Focusing on the QR factorization, CholeskyQR version consistently outperforms Householder QR across all problem sizes. In the largest configuration ($\nev+\nex=6000$), when executed on $20$ MPI processes within the column communicator, CholeskyQR completes in $100$s compared to $146$s for Householder QR, achieving a speedup of roughly $1.46\times$. For moderate configurations, the speedup even goes up to $2$-$3\times$, and more the smallest case, this speedup can achieve over $4\times$. Since QR is one of the dominant computational components of each ChASE iteration, these reductions in QR time directly contribute to a significant decrease in total runtime, particularly for large eigenproblems.

%\commnt{Edo}{We need to explain why both CholeskyQR and HHQR have a deep in parallel efficiency at 6 nodes count. I also have a question: did you average across all column communicator or did you just pick the first column communicator for the time measurement?} \commnt{XZ}{1. should link to the fact that allreduce used binary-tree scheme. 2. There is always a MPI\_Barrier before mesuring time.}
The parallel efficiency of QR mirrors these improvements. CholeskyQR maintains higher efficiency than Householder QR across all node counts and problem sizes.
For instance, in the largest configuration, CholeskyQR retains an efficiency of about $0.79$-$0.83$ on $8$-$20$ MPI processes in the column communicator, while Householder QR efficiency drops more sharply, to approximately $0.53$–$0.61$. This difference reflects the lower communication overhead and simpler computational structure of CholeskyQR, which allows it to scale more effectively within the column communicator even as the number of nodes increases.

%\commnt{XZ}{I updated all the plots using five repetitions per experiment; running more repetitions would be too time-consuming for some configurations. For the runtime plots, I show the average time with a shaded band indicating the min–max range across the runs (it was somewhat cumbersome to implement this directly in LaTeX). The parallel efficiency is computed using the averaged timings. I also changed the x-axis labels from nodes to MPI ranks. For the QR performed within the column communicator, the relevant measure of parallelism is the number of MPI ranks rather than nodes, since only 2 out of the 4 MPI ranks on each node participate in the QR within a column communicator.} \commnt{Edo}{I think the plot now look quite nice. I also think is a better idea to put MPI ranks since the parallelism is actually explicitly realized on a subset of MPI communicators for QR. Well done! }
Importantly, the convergence behavior of ChASE is essentially unaffected by the choice of QR variants: the total number of iterations and MatVecs remain nearly identical. These results confirm that CholeskyQR variant with heuristic selection can safely replace Householder QR without compromising numerical robustness, while simultaneously delivering higher parallel efficiency, shorter QR execution time, and reduced total time-to-solution.

\section{Summary and conclusions}
\label{sec-5}
In this work, we investigate how to estimate the condition number of a set of vectors $V$ after they have undergone a transformation through a polynomial filter $\pl (A)$. Our study is motivated by the possibility of replacing the traditional Householder-based QR factorization with the more computationally efficient CholeskyQR algorithm. Although CholeskyQR offers improved performance and exposes greater parallelism, it is known to suffer from a loss of orthogonality when the condition number of the matrix of vectors $\pl (A) V$ becomes too large. To mitigate this issue, one can employ more robust variants such as CholeskyQR2 or, in cases of stronger ill-conditioning, the shifted CholeskyQR2 algorithm. Each of these approaches, however, remains reliable only within specific bounds of the condition number of $\pl (A) V$. Consequently, their effective use requires some prior knowledge or estimate of this quantity.

Directly computing the exact condition number of $\pl (A) V$ would be prohibitively expensive and would compromise the performance benefits offered by the CholeskyQR-based methods. In this paper, we therefore address the problem by a careful numerical analysis of the spectral properties of the matrix of filtered vectors. Such analysis lead us to the introduction of a simple and inexpensive estimate of the condition number. The proposed estimate relies only on the convergence ratios of selected column vectors in $\pl (A) V$ together with the degree $m_i$ of the polynomial filter applied to them. Based on this estimate, we derive a practical model for dynamically selecting the most appropriate QR factorization variant, thereby maximizing performance and scalability while preserving numerical accuracy. The resulting strategy has been incorporated into version 1.6.0 of the ChASE library and thoroughly validated through numerical experiments.

%%% Local Variables: 
%%% mode: latex
%%% TeX-master: "main"
%%% End:

\section*{Acknowledgements}

This research was in part supported by the Gauss Centre for Supercomputing e.V. (www.gauss-centre.eu) through the SiVeGCS project and the Joint Lab Virtual Materials Design (JL-VMD), a cross-center platform of the Helmholtz Association established in the Helmholtz Research Field 'Information'. We are thankful to the J\"ulich Supercomputing Center for the computing time made available to perform the numerical tests. Special thanks to the team members of the Simulation and Data Lab Quantum Materials for insightful discussions and the maintainers of the FLEUR, BSE UIUC and FHI-aims codes for generating and providing the matrices of the eigenproblems used in the numerical tests. 

% Appendix
%\appendix

%\input{appendix}

\bibliography{ChASE,mypab,soft}

@article{Bauer:1957hv,
author = {Bauer, Friedrich L.},
title = {{Das Verfahren der Treppeniteration und verwandte Verfahren zur L{\"o}sung algebraischer Eigenwertprobleme}},
journal = {{Zeitschrift f{\"u}r Angewandte Mathematik und Physik ZAMP}},
year = {1957},
volume = {8},
pages = {214--235},
month = may,
doi="10.1007/BF01600502",
number = {3},
}

@article{Clint:1970wz,
author = {Clint, Maurice and Jenning, A.},
title = {{The evaluation of eigenvalues and eigenvectors of real symmetric matrices by simultaneous iteration}},
journal = {The Computer Journal},
year = {1970},
volume = {13},
pages = {76--80},
doi = {10.1093/comjnl/13.1.76},
number = {1},
}

@article{Crouzeix:1994wh,
author = {Crouzeix, Michel and Philippe, Bernard and Sadkane, Miloud},
title = {{The Davidson method}},
journal = {SIAM Journal on Scientific Computing},
year = {1994},
volume = {15},
number = {1},
pages = {62--76}
}

@inproceedings{Cullum:1974fe,
author = {Cullum, Jane and Donath, W},
title = {{A block Lanczos algorithm for computing the q algebraically largest eigenvalues and a corresponding eigenspace of large, sparse, real symmetric matrices}},
booktitle = {1974 IEEE Conference on Decision and Control including the 13th Symposium on Adaptive Processes},
year = {1974},
pages = {505--509},
publisher = {IEEE}
}

@book{Cullum:1985wj,
author = {Cullum, Jane K and Willoughby, Ralph A},
title = {{Lanczos Algorithms for Large Symmetric Eigenvalue Computations Volume 1}},
publisher = {Birkhauser},
year = {1985},
series = {Theory}
}

@article{Davidson:1975wf,
author = {Davidson, Ernest R},
title = {{The iterative calculation of a few of the lowest eigenvalues and corresponding eigenvectors of large real-symmetric matrices}},
journal = {Journal of Computational Physics},
year = {1975},
volume = {17},
number = {1},
pages = {87--94},
month = jan
}

@book{Golub:2012wt,
author = {Golub, Gene H. and Van Loan, C. F.},
title = {{Matrix Computations}},
publisher = {Johns Hopkins Univ.},
year = {2012},
address = {Baltimore and London},
edition = {3rd edition}
}

@article{Kronik:2006ff,
author = {Kronik, Leeor and Makmal, Adi and Tiago, Murilo L and Alemany, M M G and Jain, Manish and Huang, Xiangyang and Saad, Yousef and Chelikowsky, James R},
title = {{PARSEC -- the pseudopotential algorithm for real-space electronic structure calculations: recent advances and novel applications to nano-structures}},
journal = {physica status solidi (b)},
year = {2006},
volume = {243},
number = {5},
pages = {1063--1079},
month = apr
}

@article{Levitt:2015wc,
author = {Levitt, Antoine and Torrent, Marc},
title = {{Parallel eigensolvers in plane-wave Density Functional Theory}},
journal = {Computer Physics Communications},
year = {2015},
volume = {187},
pages = {98--105},
month = feb,
doi = {10.1016/j.cpc.2014.10.015}
}

@article{Parlett:1979vf,
author = {Parlett, B N and Scott, D S},
title = {{The Lanczos algorithm with selective orthogonalization}},
journal = {Mathematics of Computation},
year = {1979},
volume = {33},
number = {145},
pages = {217--238}
}

@book{Parlett:1998tv,
author = {Parlett, Beresford N.},
title = {{The Symmetric Eigenvalue Problem}},
publisher = {SIAM},
address = {Philadelphia, PA},
year = {1998},
month = jan,
doi={10.1137/1.9781611971163}
}

@article{Ruhe:1984ep,
author = {Ruhe, Axel},
title = {{Rational Krylov sequence methods for eigenvalue computation}},
journal = {Linear Algebra and its Applications},
year = {1984},
volume = {58},
pages = {391--405},
month = apr,
doi={10.1016/0024-3795(84)90221-0}
}

@article{Rutishauser:1970wc,
author = {Rutishauser, Heinz},
title = {{Simultaneous iteration method for symmetric matrices}},
journal = {Numerische Mathematik},
year = {1970},
volume = {16},
number = {3},
pages = {205--223},
doi = {10.1007/bf02219773},
}

@article{Rutishauser:1969ub,
author = {Rutishauser, Heinz},
title = {{Computational aspects of FL Bauer's simultaneous iteration method}},
journal = {Numerische Mathematik},
year = {1969},
volume = {13},
number = {1},
pages = {4--13},
doi={10.1007/BF02165269}
}

@book{Saad:2011tu,
author = {Saad, Yousef},
title = {{Numerical methods for large eigenvalue problems}},
publisher = {SIAM},
address = {Philadelphia, PA},
year = {2011},
doi={10.1137/1.9781611970739}
}

@article{Stewart:2002tw,
author = {Stewart, G. W.},
title = {{A Krylov--Schur Algorithm for Large Eigenproblems}},
journal = {SIAM Journal on Matrix Analysis and Applications},
year = {2002},
volume = {23},
number = {3},
pages = {601--614},
doi={10.1137/S0895479800371529}
}

@book{Stewart:2001id,
author = {Stewart, G. W.},
title = {{Matrix algorithms. Vol. II}},
publisher = {SIAM},
address = {Philadelphia, PA},
year = {2001},
doi={10.1137/1.9780898718058}
}

@article{Stewart:1976vaa,
author = {Stewart, G. W.},
title = {{Simultaneous iteration for computing invariant subspaces of non-Hermitian matrices}},
volume = {25},
number = {1},
pages = {123--136},
journal = {Numerische Mathematik},
year = {1976},
doi={10.1007/BF01462265}
}

@article{Stewart:1969cq,
author = {Stewart, G. W.},
title = {{Accelerating the orthogonal iteration for the eigenvectors of a Hermitian matrix}},
journal = {Numerische Mathematik},
year = {1969},
volume = {13},
number = {4},
pages = {362--376},
month = aug,
doi={10.1007/BF02165413}
}

@article{Wu:2000vq,
author = {Wu, K and Simon, H},
title = {{Thick-restart Lanczos method for large symmetric eigenvalue problems}},
journal = {SIAM Journal on Matrix Analysis and Applications},
year = {2000}
}

@article{Zhou:2014fe,
author = {Zhou, Yunkai and Chelikowsky, James R. and Saad, Yousef},
title = {{Chebyshev-filtered subspace iteration method free of sparse diagonalization for solving the Kohn--Sham equation}},
journal = {Journal of Computational Physics},
year = {2014},
volume = {274},
pages = {770--782},
month = oct
}

@article{Zhou:2006ek,
author = {Zhou, Yunkai and Saad, Yousef and Tiago, Murilo L. and Chelikowsky, James R.},
title = {{Self-consistent-field calculations using Chebyshev-filtered subspace iteration}},
journal = {Journal of Computational Physics},
year = {2006},
volume = {219},
number = {1},
pages = {172--184},
month = nov,
doi={10.1016/j.jcp.2006.03.017}
}

@article{Banerjee2016,
    author = {Banerjee, Amartya S. and Lin, Lin and Hu, Wei and Yang, Chao and Pask, John E.},
    title = {Chebyshev polynomial filtered subspace iteration in the discontinuous Galerkin method for large-scale electronic structure calculations},
    journal = {The Journal of Chemical Physics},
    volume = {145},
    number = {15},
    pages = {154101},
    year = {2016},
    month = {10},
    issn = {0021-9606},
    doi = {10.1063/1.4964861},
    url = {https://doi.org/10.1063/1.4964861},
    eprint = {https://pubs.aip.org/aip/jcp/article-pdf/doi/10.1063/1.4964861/14829545/154101_1_online.pdf},
}

@article{Banerjee2018,
author = {Banerjee, Amartya S. and Lin, Lin and Suryanarayana, Phanish and Yang, Chao and Pask, John E.},
title = {Two-Level Chebyshev Filter Based Complementary Subspace Method: Pushing the Envelope of Large-Scale Electronic Structure Calculations},
journal = {Journal of Chemical Theory and Computation},
volume = {14},
number = {6},
pages = {2930-2946},
year = {2018},
doi = {10.1021/acs.jctc.7b01243},
}

@InProceedings{Kreutzer2018,
author="Kreutzer, Moritz
and Ernst, Dominik
and Bishop, Alan R.
and Fehske, Holger
and Hager, Georg
and Nakajima, Kengo
and Wellein, Gerhard",
editor="Yokota, Rio
and Weiland, Mich{\`e}le
and Keyes, David
and Trinitis, Carsten",
title="Chebyshev Filter Diagonalization on Modern Manycore Processors and GPGPUs",
booktitle="High Performance Computing",
year="2018",
publisher="Springer International Publishing",
address="Cham",
pages="329--349",
isbn="978-3-319-92040-5"
}

@misc{dinapoli2026chase,
      title={Chebyshev Accelerated Subspsace Eigensolver for Pseudo-hermitian Hamiltonians}, 
      author={Edoardo Di Napoli and Clément Richefort and Xinzhe Wu},
      year={2026},
      eprint={2601.10557},
      archivePrefix={arXiv},
      primaryClass={math.NA},
      url={https://arxiv.org/abs/2601.10557}, 
}

@inproceedings{Wu2023chase,
author = {Wu, Xinzhe and Di Napoli, Edoardo},
title = {Advancing the distributed Multi-GPU ChASE library through algorithm optimization and NCCL library},
year = {2023},
booktitle = {SC-W '23: Proceedings of the SC '23 Workshops of the International Conference on High Performance Computing, Network, Storage, and Analysis},
isbn = {9798400707858},
publisher = {Association for Computing Machinery},
address = {New York, NY, USA},
url = {https://doi.org/10.1145/3624062.3624249},
doi = {10.1145/3624062.3624249},
pages = {1688–1696},
numpages = {9},
location = {Denver, CO, USA},
series = {SC-W '23}
}

@inproceedings{Wu2022chase,
  title={ChASE: a distributed hybrid CPU-GPU eigensolver for large-scale hermitian eigenvalue problems},
  author={Wu, X. and Davidovi{\'c}, D. and Achilles, S and Di Napoli, E.},
  booktitle={Proceedings of the Platform for Advanced Scientific Computing Conference},
  pages={1--12},
  year={2022}
}

@article{zhang2021solving,
  title={Solving the Bethe-Salpeter equation on massively parallel architectures},
  author={Zhang, X. and Achilles, S. and Winkelmann, J. and Haas, R. and Schleife, A. and Di Napoli, E.},
  journal={Computer Physics Communications},
  volume={267},
  pages={108081},
  year={2021},
  publisher={Elsevier}
}

@article{Winkelmann2019chase,
    author = {Winkelmann, J. and Springer, P. and Di Napoli, E.},
    issn = {0098-3500},
    journal = {ACM Transactions on Mathematical Software},
    month = {jun},
    number = {2},
    pages = {1--34},
    publisher = {Association for Computing Machinery},
    title = {{ChASE: Chebyshev Accelerated Subspace Iteration Eigensolver for Sequences of Hermitian Eigenvalue Problems}},
    volume = {45},
    year = {2019}
}

@article{Berljafa:2014jv,
author = {Berljafa, Mario and Wortmann, Daniel and Di Napoli, Edoardo},
title = {{An optimized and scalable eigensolver for sequences of eigenvalue problems}},
journal = {Concurrency and Computation: Practice and Experience},
year = {2015},
volume = {27},
pages = {905--922},
month = sep,
doi = {10.1002/cpe.3394}
}

@article{DiNapoli:2012uy,
author = {Di Napoli, Edoardo and Berljafa, Mario},
title = {{Block Iterative Eigensolvers for Sequences of Correlated Eigenvalue Problems}},
journal = {arXiv.org},
year = {2012},
eprint = {1206.3768v2},
eprinttype = {arxiv},
eprintclass = {cs.DS},
month = jun
}

@misc{FLEUR,
author = {Bl\"ugel, Stefan and Bihlmayer, Gustav and Wortmann, Daniel},
title = {{The FLEUR code}},
note = {Accessed: 2018-01-22},
howpublished = {\url{http://www.flapw.de}},
year = {2017}
}

@misc{ChASE,
author = {Di Napoli, Edoardo and Wu, Xinzhe and Richefort, Clement},
title = {{The ChASE library}},
howpublished = {\url{https://github.com/ChASE-library/ChASE}},
doi = {https://doi.org/10.5281/zenodo.6366000},
note = {Accessed:2026-03-10},
year = {2026}
}

@article{fukaya2020shifted,
  title={Shifted Cholesky QR for computing the QR factorization of ill-conditioned matrices},
  author={Fukaya, Takeshi and Kannan, Ramaseshan and Nakatsukasa, Yuji and Yamamoto, Yusaku and Yanagisawa, Yuka},
  journal={SIAM Journal on Scientific Computing},
  volume={42},
  number={1},
  pages={A477--A503},
  year={2020},
  publisher={SIAM}
}

@inproceedings{fukaya2014choleskyqr2,
  title={CholeskyQR2: a simple and communication-avoiding algorithm for computing a tall-skinny QR factorization on a large-scale parallel system},
  author={Fukaya, Takeshi and Nakatsukasa, Yuji and Yanagisawa, Yuka and Yamamoto, Yusaku},
  booktitle={2014 5th workshop on latest advances in scalable algorithms for large-scale systems},
  pages={31--38},
  year={2014},
  organization={IEEE}
}

@article{demmel2008communication,
  title={Communication-avoiding parallel and sequential QR factorizations},
  author={Demmel, James and Grigori, Laura and Hoemmen, Mark and Langou, Julien},
  journal={CoRR abs/0806.2159},
  year={2008}
}

@article{ballard2015reconstructing,
  title={Reconstructing Householder vectors from tall-skinny QR},
  author={Ballard, Grey and Demmel, James and Grigori, Laura and Jacquelin, Mathias and Knight, Nicholas and Nguyen, Hong Diep},
  journal={Journal of Parallel and Distributed Computing},
  volume={85},
  pages={3--31},
  year={2015},
  publisher={Elsevier}
}

@article{wortmann2023fleur,
  title={Fleur},
  author={Wortmann, Daniel and Michalicek, Gregor and Baadji, Nadjib and Beida, Wejdan and Betzinger, Markus and Bihlmayer, Gustav and Bornhake, Thomas and Br{\"o}der, Jens and Burnus, Tobias and Enkovaara, Jussi and others},
  journal={Zenodo},
  year={2023}
}

@article{blum2009ab,
  title={Ab initio molecular simulations with numeric atom-centered orbitals},
  author={Blum, Volker and Gehrke, Ralf and Hanke, Felix and Havu, Paula and Havu, Ville and Ren, Xinguo and Reuter, Karsten and Scheffler, Matthias},
  journal={Computer Physics Communications},
  volume={180},
  number={11},
  pages={2175--2196},
  year={2009},
  publisher={Elsevier}
}

@article{Rutishauser:1971wc,
author = {Rutishauser, H},
title = {{Simultaneous iteration method for symmetric matrices}},
journal = {Handbook for Automatic Computation},
year = {1971}
}

@article{abinit:2020,
    author = {Romero, Aldo H. and Allan, Douglas C. and Amadon, Bernard and Antonius, Gabriel and Applencourt, Thomas and Baguet, Lucas and Bieder, Jordan and Bottin, François and Bouchet, Johann and Bousquet, Eric and Bruneval, Fabien and Brunin, Guillaume and Caliste, Damien and Côté, Michel and Denier, Jules and Dreyer, Cyrus and Ghosez, Philippe and Giantomassi, Matteo and Gillet, Yannick and Gingras, Olivier and Hamann, Donald R. and Hautier, Geoffroy and Jollet, François and Jomard, Gérald and Martin, Alexandre and Miranda, Henrique P. C. and Naccarato, Francesco and Petretto, Guido and Pike, Nicholas A. and Planes, Valentin and Prokhorenko, Sergei and Rangel, Tonatiuh and Ricci, Fabio and Rignanese, Gian-Marco and Royo, Miquel and Stengel, Massimiliano and Torrent, Marc and van Setten, Michiel J. and Van Troeye, Benoit and Verstraete, Matthieu J. and Wiktor, Julia and Zwanziger, Josef W. and Gonze, Xavier},
    title = {ABINIT: Overview and focus on selected capabilities},
    journal = {The Journal of Chemical Physics},
    volume = {152},
    number = {12},
    pages = {124102},
    year = {2020},
    month = {03},
    issn = {0021-9606},
    doi = {10.1063/1.5144261},
    url = {https://doi.org/10.1063/1.5144261},
}
\bibliographystyle{siam}
%\vspace{-0.1cm}

\end{document}